\numberwithin{equation}{section}
\newtheorem{theorem}{Theorem}[section]
\newtheorem{lemma}[theorem]{Lemma}
\newtheorem{remark}[theorem]{Remark}
\title[Gauss curvature flow to the $L_p$-Gaussian chord Minkowski problem]{Gauss curvature flow to the $L_p$-Gaussian chord Minkowski problem}
\author{Xia Zhao and Peibiao Zhao}
\thanks{2020 Mathematics Subject Classification:  52A20 \ \ 35K96\ \ 58J35.}
\keywords{Gauss curvature flow; $L_p$-Gaussian chord Minkowski problem; Monge-Amp\`{e}re equation}
\begin{document}
\begin{abstract}
Recently, Huang and Qin \cite{HY01} introduced the Gaussian chord measure and $L_p$-Gaussian chord measure by variational methods. Meanwhile, they posed Gaussian chord Minkowski problem for $p=1$ and used variational methods to obtain an origin-symmetric normalized measure solution for the Gaussian chord Minkowski problem. The smooth solution, up to now, to the $L_p$-Gaussian chord Minkowski problem is still open.

Motivated by the forgoing works by Huang and Qin in \cite{HY01}, we propose in the present paper the $L_p(p>0)$-Gaussian chord Minkowski problem and log-Gaussian chord Minkowski problem, and obtain the smooth even solutions to these two types of problems by the method of a Gauss curvature flow.
\end{abstract}

\maketitle

\vskip 20pt
\section{Introduction and main results}

In the past 30 years, the Minkowski problem has played an important role in the study of convex geometry, and the research of Minkowski problem has promoted the development of fully nonlinear partial differential equations. The classical Minkowski problem argues the existence, uniqueness and regularity of a convex body whose surface area measure is equal to a pre-given Borel measure on the sphere. If the given measure has a positive continuous density, the Minkowski problem can be seen as the problem of prescribing the Gauss curvature in differential geometry. With the emergence of $L_p$ surface measure, the $L_p$ Minkowski problem has also been proposed. In particularly, the $L_p$($p\in \mathbb{R}$)-Minkowski problem is extremely important, because it contains some special versions, such as when $p=1$, it is the classical Minkowski problem; when $p=0$, the critical $L_0$ case which is called cone volume measure, the Minkowski problem for cone volume measure is the log-Minkowski problem \cite{BO}; when $p=-n$, it corresponds to the centro-affine Minkowski problem \cite{ZG}; the $L_p$-Minkowski problem with $p>1$ was first proposed and studied by Lutwak \cite{LE0}, whose solution plays a key role in establishing the $L_p$-affine Sobolev inequality \cite{HC2, LE01}.

With the development of Minkowski problems, scholars have proposed some types of Minkowski problems. We know that the different geometric measures are corresponding to different Minkowski type problems. For instance, Xiao \cite{XJ} prescribed capacitary curvature measures on planar convex domains: If a given finite, nonnegative Borel measure $\mu \in S^1$ has centroid at the origin and its supp($\mu$) does not comprise any pair of antipodal points, then, there is a unique (up to translation) convex, nonempty, open set $\Omega\subset \mathbb{R}^2$ such that $d\mu_q(\Omega,\cdot)=d\mu(\cdot)$, where $\mu_q(\Omega,\cdot)$ is $q$-capacitary curvature measure of $\Omega$ with $q\in(1,2]$. 

Recently, a theory analogous to the one for the Minkowski problem was introduced by Lutwak, Xi, Yang, and Zhang \cite{LE1}, the volume is replaced by ($q$) chord integrals and surface area measure is replaced by the differential of ($q\geq0$) chord integrals which is defined chord measure. Based on chord measure, they posed and solved chord Minkowski problem and chord log-Minkowski problem (partly solved). Interestingly, when $q=1$, the chord measure recovers surface area measure, and when $(q=0)$, it is the area measure $S_{n-2}$. For the $L_p$ chord Minkowski problem, Xi, Yang, Zhang and Zhao \cite{XD} used variational methods gave a measure solution when $p>1$ and $0<p<1$ in the symmetric case. Guo, Xi and Zhao \cite{GL} also obtained a measure solution for $0\leq p<1$ by similar methods without the symmetric assumption. In addition, authors \cite{ZX} posed the Orlicz chord Minkowski problem and obtained smooth solutions by a Gauss curvature flow. When $q=1$, the $L_p$ chord Minkowski problem is the $L_p$ Minkowski problem. For other references with respect to the $L_p$ chord Minkowski problem, please refer to \cite{HJ1, HJ2, LYY, QL}.

Huang, Xi and Zhao \cite{HY02} extended the classical Minkowski problem in $\mathbb{R}^n$ to the Gaussian probability space, where the volume and surface area measure in Euclidean space were replaced by Gaussian volume and Gaussian surface measure, respectively. Then the Gaussian Minkowski problem was posed and provided a sufficient condition for the existence of solution. Later, Liu \cite{LJ} extended the Gaussian Minkowski problem to $L_p$ form, when $p\geq1$, a sufficient condition for the existence and uniqueness of origin-symmetric weak solution was given. Moreover, Feng, Hu and Xu \cite{FE} provided the existence of symmetric (resp. asymmetric) solutions to the problem for $p\leq0$ (resp. $p\geq 1$). Furthermore, Sheng and Xue \cite{SW} obtained smooth solutions by method of Gauss curvature flow for $p>0$ and $-n-1<p\leq 0 $ to normalized $L_p$ Gaussian Minkowski problem and $p\geq n+1$ and $0<p<n+1$ to $L_p$ Gaussian Minkowski problem. Contrary to classical surface area measure, the Gaussian surface measure is neither translationally invariant nor homogeneous. These special properties make the Gaussian Minkowski problem quite differ from the classical Minkowski problem, which has an interest on its own.

Very recently, based on chord integral and  Gaussian probability space, Huang and Qin \cite{HY01} generalized the chord integrals to Gaussian probability space for $q>1$. In the case of $q>1$, the Gaussian chord integral is defined by

\begin{align}\label{eq101} 
\nonumber I_{\gamma,q}(K)&=\int_K\int_K\frac{1}{|z-y|^{n-q+1}}d\mathcal{H}^n_{\gamma}(z)d\mathcal{H}^n_{\gamma}(y)\\
&=\int_K\int_K\frac{e^{-(|z|^2+|y|^2)/2}}{|z-y|^{n-q+1}}dzdy.
\end{align}

Furthermore, they established ($L_p$) the variational formula of Gaussian chord integral with ($L_p$) Minkowski sum. Here, we first state the variational formula for $p=1$. 

\begin{theorem}\label{the10}\cite[Theorem 4.3]{HY01}~~~~Let $K\in\mathcal{K}_o^n$ and $q>1$. Suppose that $g:S^{n-1}\rightarrow \mathbb{R}$ is a continuous function and $h_t:S^{n-1}\rightarrow \mathbb{R}$ is given by
\begin{align*}
h_t(u)=h_K(u)+tg(u)+o(t,u), \quad t\in (-\delta,+\delta) \quad \text{and}\quad u\in S^{n-1},
\end{align*}
where $o(t,u)/t\rightarrow 0$ uniformly on $S^{n-1}$, as $t\rightarrow 0$. If
\begin{align*}
K_t=\{x\in \mathbb{R}^n:x\cdot u\leq h_t(u)~~\text{for all}~~u\in S^{n-1}\}, \quad t\in (-\delta,+\delta),
\end{align*}
is the Wulff shape of $h_t$, it holds 
\begin{align*}
\lim_{t\rightarrow 0}\frac{\rho_{K_t}(u)-\rho_K(u)}{t}=\frac{g(\nu_K(\rho_K(u)u))}{u\cdot\nu_K(\rho_K(u)u)}.
\end{align*}
Then
\begin{align*}
\frac{d}{dt}\bigg| I_{\gamma,q}(K_t)=&\lim_{t\rightarrow 0}\frac{I_{\gamma,q(K_t)}-I_{\gamma,q(K)}}{t}\\
=&\lim_{t\rightarrow 0}\frac{1}{t}\bigg\{\int_{K}\int_{K}\frac{1}{|z-y|^{n-q+1}d\gamma(z)d\gamma(y)}-\int_{K_t}\int_{K_t}\frac{1}{|z-y|^{n-q+1}d\gamma(z)d\gamma(y)}\bigg\}\\
=&\int_{S^{n-1}}\int_{S^{n-1}}\lim_{t\rightarrow 0}\bigg\{\int_0^{\rho_{K_t}(u)}\int_0^{\rho_{K_t}(v)}-\int_0^{\rho_{K}(u)}\int_0^{\rho_{K}(v)}\bigg\}
\frac{r_1^{n-1}r_2^{n-1}e^{-(r_1^2+r_2^2)/2}}{|r_1u-r_2v|^{n-q+1}}dr_1dr_2dudv\\
=&2\int_{S^{n-1}}\int_{S^{n-1}}\int_0^{\rho_K(v)}\frac{\rho_K(u)^{n-1}r_2^{n-1}e^{-(\rho_K(u)^2+r_2^2)/2}}{|\rho_K(u)u-r_2v|^{n-q+1}}\lim_{t\rightarrow 0}\frac{\rho_{K_t}(u)-\rho_K(u)}{t}dr_2dudv\\
=&2\int_{S^{n-1}}\int_{S^{n-1}}\int_0^{\rho_K(v)}\frac{\rho_K(u)^{n-1}r_2^{n-1}e^{-(\rho_K(u)^2+r_2^2)/2}}{|\rho_K(u)u-r_2v|^{n-q+1}}
\frac{g(\nu_K(\rho_K(u)u))}{u\cdot\nu_K(\rho_K(u)u)}dr_2dudv\\
=&2\int_{\partial K}\int_K\frac{e^{-z^2/2}}{|y-z|^{n-q+1}}dz\cdot g(\nu_K(y))e^{-y^2/2}d\mathcal{H}^{n-1}(y)\\
=&\int_{\partial K}g(\nu_K(y))\widetilde{V}_{\gamma,q}(K,y)d\mathcal{H}^{n-1}(y)=\int_{S^{n-1}}g(u)dF_{\gamma,q}(K,u),
\end{align*}
where
\begin{align}\label{eq102} 
\widetilde{V}_{\gamma,q}(K,y)=2e^{-|y|^2/2}\bigg\{\int_K\frac{e^{-|z|^2/2}}{|z-y|^{n-q+1}}dz\bigg\},
\end{align}
and
\begin{align*}
{F}_{\gamma,q}(K,\eta)=\int_{\nu_K^{-1}(\eta)}\widetilde{V}_{\gamma,q}(K,y)d\mathcal{H}^{n-1}(y), \quad \text{Borel set}~~\eta\subseteq S^{n-1}.
\end{align*}
\end{theorem}
Next, a similar result holds for the $L_p$-combination perturbation of the supporting function, which is stated in the following theorem.
\begin{theorem}\label{the11}\cite[Theorem 4.4]{HY01}~~~~Let $p\neq 0$, $q>1$ and $K\in\mathcal{K}^n_o$. Suppose that $g:S^{n-1}\rightarrow \mathbb{R}$ is a continuous function and $h_t:S^{n-1}\rightarrow \mathbb{R}$ is given by
\begin{align*}
h_t(u)=(h_K(u)^p+tg(u)^p)^{\frac{1}{p}}, \quad u\in S^{n-1}
\end{align*}
and $K_t$ is the Wulff shape of $h_t$, it holds
\begin{align*}
\lim_{t\rightarrow 0}\frac{\rho_{K_t}(u)-\rho_K(u)}{t}=\frac{1}{p}\frac{g(\nu_K(\rho_K(u)u))^ph_K(\nu_K(\rho_K(u)u))^{1-p}}{u\cdot\nu_K(\rho_K(u)u)}.
\end{align*}
Then
\begin{align}\label{eq103} 
\nonumber\frac{d}{dt}\bigg|_{t=0}I_{\gamma,q}(K_t)=&\int_{\partial K}g(\nu_K(y))^p\widetilde{V}_{\gamma,q}(K,y)h_K(\nu_K(y))^{1-p}d\mathcal{H}^{n-1}(y)\\
=&\int_{S^{n-1}}g(u)^pdG_{\gamma,p,q}(K,u),
\end{align}
where
\begin{align}\label{eq104} 
G_\gamma^{p,q}=G_{\gamma,p,q}(K,\eta)=\frac{2}{p}\int_{\nu_K^{-1}(\eta)}\widetilde{V}_{\gamma,q}(K,y)h_K(\nu_K(y))^{1-p}d\mathcal{H}^{n-1}(y), \text{Borel set} ~~\eta\subseteq S^{n-1}.
\end{align}
\end{theorem}
In fact, we can directly obtain Theorem \ref{the11} by replacing $\lim_{t\rightarrow0}\frac{\rho_{K_t}(u)-\rho_K(u)}{t}$ with $\frac{1}{p}\frac{g(\nu_K(\rho_K(u)u))^ph_K(\nu_K(\rho_K(u)u))^{1-p}}{u\cdot\nu_K(\rho_K(u)u)}$ in Theorem \ref{the10}.
When $p=1$, (\ref{eq103}) correspondings to the Gaussian chord measure in Theorem \ref{the10}.

As for the $\log$-Minkowski perturbation, the variational formula of nonlocal function in Gaussian probability space is obtained in the following Theorem.

\begin{theorem}\label{the12}\cite[Theorem 4.5]{HY01}~~~~Let $q>1$ and $K\in\mathcal{K}^n_o$. Suppose that $g:S^{n-1}\rightarrow \mathbb{R}$ is a continuous function and $h_t:S^{n-1}\rightarrow \mathbb{R}$ is given by
\begin{align*}
\log h_t(u)=&\log h_K(u)+tg(u)+o(t,u)\quad t\in (-\delta,+\delta)\quad \text{and} \quad u\in S^{n-1},
\end{align*}
where, $o(t,\cdot)/t \rightarrow 0$ uniformly on $S^{n-1}$, as $t\rightarrow 0$. If $K_t$ is the Wulff shape of $h_t$, it holds
\begin{align*}
\lim_{t\rightarrow0}\frac{\rho_{K_t}(u)-\rho_K(u)}{t}=&\rho_{K_t}^\prime(u)|_{t=0}=\rho_{K}(u)\log \rho_{K_t}^\prime(u)|_{t=0}\\
=&\rho_{K}(u)\lim_{t\rightarrow 0}\frac{\log\rho_{K_t}(u)-\log\rho_K(u)}{t}=h_K(\nu_K(\rho_K(u)u))\frac{g(\nu_K(\rho_K(u)u))}{u\cdot\nu_K(\rho_K(u)u)}.
\end{align*}
Then
\begin{align}\label{eq105} 
\nonumber\frac{d}{dt}\bigg|_{t=0}I_{\gamma,q}(K_t)=&\int_{\partial K}g(\nu_K(y))\widetilde{V}_{\gamma,q}(K,y)h_K(\nu_K(y))d\mathcal{H}^{n-1}(y)\\
=&\int_{S^{n-1}}g(u)dG_{\gamma,0,q}(K,u),
\end{align}
where
\begin{align}\label{eq106} 
G_\gamma^{\log,q}=G_{\gamma,0,q}(K,\eta)=\int_{\nu_K^{-1}(\eta)}\widetilde{V}_{\gamma,q}(K,y)h_K(\nu_K(y))d\mathcal{H}^{n-1}(y), \text{Borel set} ~~\eta\subseteq S^{n-1}.
\end{align}
\end{theorem}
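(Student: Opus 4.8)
The plan is to prove \eqref{eq105} by a moving‑boundary (transport) computation, carried out first for smooth data and then extended by approximation; an alternative is to recover it as the $p\to0$ limit of Theorem~\ref{the11}.

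\emph{The smooth case.} Assume first that $K$ is smooth and strictly convex, that $g\in C^{\infty}(S^{n-1})$, and take as model family $\log h_t=\log h_K+t\,g^{p}$ (the general hypothesis will be reduced to this below). For $|t|$ small the Wulff shape $K_t$ of $h_t$ is then smooth and strictly convex and is parametrized by the reverse Gauss map $u\mapsto X(u,t):=\nabla h_t(u)$, the gradient of the $1$-homogeneous extension of $h_t$. Since $\partial_t X(u,t)=\nabla(\partial_t h_t)(u)$ and $\partial_t h_t(\cdot,t)$ is $1$-homogeneous, Euler's relation gives $\langle\partial_t X(u,t),u\rangle=\partial_t h_t(u)$, i.e.\ the outward normal speed of $\partial K_t$ at the point with outer normal $u$ equals $\partial_t h_t(u)$, with $\partial_t h_t(u)\big|_{t=0}=h_K(u)\,g(u)^{p}$. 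Writing $I_{\gamma,q}(K_t)=\int_{K_t}G_t(y)\,dy$ with $G_t(y)=\int_{K_t}e^{-(|z|^2+|y|^2)/2}|z-y|^{-(n-q+1)}\,dz$, we note that $q>1$ forces $n-q+1<n$, so the chord kernel is locally integrable on $\mathbb{R}^n$ and $G_t(y)$ is finite and jointly continuous, with $\widetilde V_{\gamma,q}(K_t,y)=2G_t(y)$ by \eqref{eq104}. Applying the transport theorem to $t\mapsto\int_{K_t}G_t$ and, inside it, to $t\mapsto G_t$ (only the domain depends on $t$), and using the symmetry $z\leftrightarrow y$ of the kernel,
\begin{align*}
\frac{d}{dt}I_{\gamma,q}(K_t)&=2\int_{\partial K_t}G_t(y)\,\big\langle\partial_t X,\nu_{K_t}(y)\big\rangle\,d\mathcal{H}^{n-1}(y)\\
&=\int_{\partial K_t}\widetilde V_{\gamma,q}(K_t,y)\,\partial_t h_t(\nu_{K_t}(y))\,d\mathcal{H}^{n-1}(y).
\end{align*}
Evaluating at $t=0$ gives the first equality of \eqref{eq105}, and pushing the measure $y\mapsto\widetilde V_{\gamma,q}(K,y)\,h_K(\nu_K(y))\,d\mathcal{H}^{n-1}(y)$ forward under $\nu_K\colon\partial K\to S^{n-1}$ and invoking \eqref{eq106} gives the second equality.

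\emph{Removing smoothness — the main obstacle.} For a general $K\in\mathcal{K}^n_o$, a merely continuous $g$, and the genuine hypothesis $\log h_t=\log h_K+t\,g^{p}+o(t,\cdot)$ with uniform remainder, the Wulff shapes $K_t$ carry no regularity and the computation above is not directly available; this step is the technical heart of the argument. One proceeds in two stages. First, for smooth strictly convex $K$ and smooth $g$ one shows the $o(t,\cdot)$-remainder does not affect the $t$-derivative, e.g.\ by squeezing $\big(I_{\gamma,q}(K_t)-I_{\gamma,q}(K)\big)/t$ between the derivatives of the comparison families $\log h_K+t(g^{p}\pm\varepsilon_t)$ with $\varepsilon_t\to0$, using monotonicity of $I_{\gamma,q}$ under inclusion. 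Second, one approximates $K$ in the Hausdorff metric by smooth strictly convex bodies $K^{(j)}$ and $g$ uniformly by $g^{(j)}\in C^{\infty}(S^{n-1})$; since all these bodies lie in a fixed annulus and $q>1$, the functions $\widetilde V_{\gamma,q}(K^{(j)},\cdot)$ are uniformly bounded, and one passes to the limit using Hausdorff continuity of $I_{\gamma,q}$ and weak continuity of the measures in \eqref{eq106}. This is precisely where the singularity of the chord kernel must be controlled, and it is harmless exactly because $q>1$ keeps $|z-y|^{-(n-q+1)}$ locally integrable on $\mathbb{R}^n$.

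\emph{Alternative route.} One can instead deduce \eqref{eq105} from Theorem~\ref{the11}: for $p\neq0$, feed into \eqref{eq102}--\eqref{eq103} a function $g_p$ chosen so that the $L_p$-family $(h_K^{p}+t\,g_p^{p})^{1/p}$ agrees to first order in $t$ with $h_K\exp\!\big(t\,g^{p}+o(t)\big)$, let $p\to0$, and verify that the attendant rescaling of $G_{\gamma,p,q}(K,\cdot)$ converges weakly to $G_{\gamma,0,q}(K,\cdot)$; then \eqref{eq102}--\eqref{eq103} pass in the limit to \eqref{eq105}--\eqref{eq106}.
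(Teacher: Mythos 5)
This theorem is not proved in the present paper: it is quoted verbatim from Huang and Qin \cite[Theorem 4.5]{HY01} and used as a black box, so there is no internal proof to compare your argument against. Your proposal must therefore be judged on its own merits.

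Your smooth-case computation is correct and essentially self-contained. Writing $I_{\gamma,q}(K_t)=\int_{K_t}G_t$ with $G_t(y)=\int_{K_t}k(z,y)\,dz$, $k(z,y)=e^{-(|z|^2+|y|^2)/2}|z-y|^{-(n-q+1)}$, and applying the Reynolds transport theorem twice plus the $z\leftrightarrow y$ symmetry of $k$ and Fubini, one indeed gets $\tfrac{d}{dt}I_{\gamma,q}(K_t)=\int_{\partial K_t}\widetilde V_{\gamma,q}(K_t,y)\,\partial_t h_t(\nu_{K_t}(y))\,d\mathcal{H}^{n-1}(y)$; at $t=0$, $\partial_t h_t=h_K g^p$ yields \eqref{eq105}. (Fubini is legitimate because $\int_{\partial K_t}\int_{K_t}|k|\,dy\,d\mathcal{H}^{n-1}(z)<\infty$: the inner integral is finite uniformly in $z\in\partial K_t$ precisely because $n-q+1<n$, i.e.\ $q>1$.) This is a genuinely different route from the one typically taken in the convex-geometry literature (and almost certainly in \cite{HY01}), where such first-variation formulas for Wulff shapes are derived via the Aleksandrov variational lemma on radial functions rather than by a moving-boundary/transport-theorem argument; your PDE-flavored derivation buys a very explicit identification of the normal speed, while the Aleksandrov route handles arbitrary $K\in\mathcal K^n_o$ and a merely continuous $o(t,\cdot)$ remainder from the start without a separate smoothing stage.

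Two places in the reduction step deserve sharpening before the argument is complete. First, when squeezing $(I_{\gamma,q}(K_t)-I_{\gamma,q}(K))/t$ between the comparison families $\log h_K+t(g^p\pm\varepsilon_t)$, the inclusion of the corresponding Wulff shapes reverses sign with $t$, so one must treat the right- and left-derivatives separately and match them; this is routine but should be said. Second, the passage $K^{(j)}\to K$, $g^{(j)}\to g$ requires knowing that the limit of derivatives is the derivative of the limit, which needs a uniform-in-$j$ modulus of differentiability (or an equicontinuity statement for $t\mapsto I_{\gamma,q}(K^{(j)}_t)$ near $t=0$); merely invoking Hausdorff continuity of $I_{\gamma,q}$ and weak convergence of $G_{\gamma,0,q}(K^{(j)},\cdot)$ does not by itself justify interchanging $\lim_j$ and $\tfrac{d}{dt}\big|_{t=0}$. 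These are fixable technical gaps, not conceptual ones. The suggested alternative via a $p\to0$ limit of Theorem~\ref{the11} is plausible in outline but is only a sketch: the rescaling needed to make $G_{\gamma,p,q}$ converge weakly to $G_{\gamma,0,q}$ as $p\to 0$ (note the $2/p$ factor in \eqref{eq103}) would have to be carried out explicitly.
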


In the same way, we immediately get Theorem \ref{the12} by replacing $\lim_{t\rightarrow0}\frac{\rho_{K_t}(u)-\rho_K(u)}{t}$ with $h_K(\nu_K(\rho_K(u)u))\frac{g(\nu_K(\rho_K(u)u))}{u\cdot\nu_K(\rho_K(u)u)}$ in Theorem \ref{the10}.

With the help of variational formula (\ref{eq103}), we can propose the  normalized $L_p$-Gaussian chord Minkowski problem for $p \neq 0$.

{\bf The $L_p$-Gaussian chord Minkowski problem.} Let $q>1$, $p\neq 0$ and $\mu$ be a finite Borel measure on $S^{n-1}$, under what necessary and sufficient conditions, does there exist a unique convex body $\Omega\in\mathcal{K}_o^n$ and positive constant $\tau_1$ so that
\begin{align}\label{eq107}
\mu=\tau_1G_{\gamma}^{p,q}(\Omega,\cdot)?
\end{align}

From \cite[Proposition 3.7]{HY01}, we know that $G_{\gamma}^{p,q}(\Omega,\cdot)$ is absolutely continuous with respect to the surface measure $S(K,\cdot)$. If the given measure $\mu$ is absolutely continuous with respect to the spherical Lebesgue measure, that is, $µ$ has a density function $f:S^{n-1}\rightarrow (0,\infty)$ is smooth, then, solving problem (\ref{eq107}) can be equivalently viewed as solving the following normalized Monge-Amp\`{e}re equation from the (\ref{eq104}) for $p\neq0$ on $S^{n-1}$.
\begin{align}\label{eq108} 
\tau_1 \widetilde{V}_{\gamma,q}(\Omega,\cdot)h_{\Omega}^{1-p}\det(h_{ij}+h\delta_{ij})=f.
\end{align}

When $p=1$, the $L_p$-Gaussian chord Minkowski problem is the Gaussian chord Minkowski problem which was first studied by Huang and Qin \cite{HY01}, they obtained an origin-symmetric normalized measure solution by variational method.

Similar to cone volume measure, we call $G_\gamma^{\log,q}$ cone-Gaussian chord measure. The Minkowski problem prescribing cone-Gaussian chord measures is:

{\bf The log-Gaussian chord Minkowski problem.} Let $q>1$ and $\mu$ be a finite Borel measure on $S^{n-1}$, under what necessary and sufficient conditions, does there exist a unique convex body $\Omega\in\mathcal{K}_o^n$ and positive constant $\tau_2$ so that
\begin{align}\label{eq107+}
\mu=\tau_2G_{\gamma}^{\log,q}(\Omega,\cdot)?
\end{align}

The partial differential equation associated with (\ref{eq107+}) is a new type of Monge-Amp\`{e}re
equation from (\ref{eq106}) on $S^{n-1}$,
\begin{align}\label{eq108+} 
\tau_2 \widetilde{V}_{\gamma,q}(\Omega,\cdot)h_{\Omega}\det(h_{ij}+h\delta_{ij})=f.
\end{align}

In this paper, we will study the $L_p$-Gaussian chord Minkowski problem and give the existence of smooth even solutions for (\ref{eq108}) with $q>2$ and $p>0$ by the method of a Gauss curvature flow. In addition, we also provide a smooth even solution to log-Gaussian chord Minkowski problem to (\ref{eq108+}). The Gauss curvature flow was first introduced and studied by Firey \cite{FI} to model the shape change of worn stones. It can mainly be used to study the existence of smooth solutions to the famous Minkowski (type) problems. For instances, Chen, Huang and Zhao \cite{CC} obtained smooth even solutions to the $L_p$ dual Minkowski problem by the method of Gauss curvature flow. Liu and Lu \cite{LY} used a Gauss curvature flow to solve dual Orlicz-Minkowski problem and obtained its smooth solutions. Since then, various problems of Gauss curvature flows have been extensively studied, see examples \cite{AB1, BR, BR1, CK0, LR, LY1} and the references therein.

\begin{remark} From the definitions of the $L_p(p\neq 0)$-combination perturbation and log-Minkowski perturbation of the support function, we konw that the log-Minkowski perturbation is the limit form of the $L_p$ combination perturbation when $p$ tends to $0$. In this sense, the variational formula (\ref{eq103}) and (\ref{eq105}) belong to different categories, then, the corresponding $L_p(p\neq 0)$-Gaussian chord Minkowski problem (\ref{eq107}) is different from log-Gaussian chord Minkowski problem (\ref{eq107+}). However, when we transform (\ref{eq107}) to equivalent form equation (\ref{eq108}) and (\ref{eq107+}) to (\ref{eq108+}), from the perspective of the equation, these two problems can be considered unifiedly. To this end, we can unifiedly  construct the following curvature flow to solve the $L_p$-Gaussian chord Minkowski problem for $p> 0$ and log-Gaussian chord Minkowski problem for the critical $L_0$ case.

\end{remark}

Let $\partial\Omega_0$ be a smooth, closed and origin-symmetric strictly convex hypersurface in $\mathbb{R}^n$. We consider the long-time existence and convergence of the following Gauss curvature flow which is a family of convex hypersurfaces $\partial\Omega_t$  parameterized by  smooth maps $X(\cdot ,t):
S^{n-1}\times (0, \infty)\rightarrow \mathbb{R}^n$
satisfying the initial value problem
\begin{align}\label{eq109}
\left\{
\begin{array}{lc}
\frac{\partial X(x,t)}{\partial t}=-\theta(t)\frac{\langle X, v\rangle^p\mathcal{K}
(x,t)f(v)}{\widetilde{V}_{\gamma,q}(\Omega_t,\cdot)}v+X(x,t),  \\
X(x,0)=X_0(x),\\
\end{array}
\right.
\end{align}
where $\mathcal{K}(x,t)$ is the Gauss curvature of hypersurface $\partial\Omega_t$,  $v=x$ is the
outer unit normal at $X(x,t)$, $\langle X,v \rangle$ represents standard inner product of $X$ and $v$, and $\theta(t)$ is given by
\begin{align*}
\theta(t)=\frac{\int_{S^{n-1}}\widetilde{V}_{\gamma,q}(\Omega_t,\cdot)\rho^n(\xi,t)d\xi}{\int_{S^{n-1}}h^p(x,t)f(x)dx},
\end{align*}
where $\rho$ and $h$ are the radial function and support function of the convex hypersurface $\partial \Omega_t$, respectively.

\begin{remark}When $p=0$, the flow (\ref{eq109}) and $\theta(t)$ be meaningful. Thus, the flow (\ref{eq109}) can also be used to study the log-Gaussian chord Minkowski problem. 
\end{remark}

Combining problem (\ref{eq108}), (\ref{eq108+}) with flow (\ref{eq109}), we establish the following result in this article.

\begin{theorem}\label{thm13}
Suppose $q>2$, $p\geq 0$, $\partial\Omega_0$ be a smooth, closed and origin-symmetric strictly convex hypersurface in $\mathbb{R}^n$ and $f$ be a positive smooth even function on $S^{n-1}$. Then, the flow (\ref{eq109}) has a unique smooth solution to the $\partial\Omega_t=X(S^{n-1},t)$ for $t\in(0,\infty)$. When $t\rightarrow \infty$, there is a subsequence of $\partial\Omega_t$ converges in $C^\infty$to a smooth, closed, origin-symmetric and strictly convex hypersurface $\Omega_\infty$, whose support function satisfies (\ref{eq108}) for $p>0$ and (\ref{eq108+}) for $p=0$.
\end{theorem}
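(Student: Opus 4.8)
The plan is to identify \eqref{eq108} as the stationary equation of the flow \eqref{eq109} and to run the standard scheme for Gauss curvature flows in four stages: reduce \eqref{eq109} to a scalar parabolic Monge--Amp\`ere equation; produce a conserved quantity and a monotone functional; establish uniform a priori $C^{0}$, $C^{1}$ and $C^{2}$ estimates, hence long-time existence and higher regularity; and finally extract a $C^{\infty}$-convergent subsequence whose limit solves \eqref{eq108}. Since each $\partial\Omega_{t}$ is strictly convex we parametrise the flow by the outer normal $x\in S^{n-1}$; writing $h=h(\cdot,t)$ for the support function of $\Omega_{t}$, using $\langle X,\nu\rangle=h$ and $\mathcal K(x,t)=[\det(h_{ij}+h\delta_{ij})]^{-1}$ (with $h_{ij}$ the covariant Hessian on $S^{n-1}$), the flow \eqref{eq109} becomes
\begin{equation*}
\partial_{t}h=-\,\theta(t)\,\frac{h^{p}f(x)}{\widetilde V_{\gamma,q}(\Omega_{t},x)\,\det(h_{ij}+h\delta_{ij})}+h,\qquad h(\cdot,0)=h_{\Omega_{0}},
\end{equation*}
which is parabolic while $(h_{ij}+h\delta_{ij})$ is positive definite. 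Since $\partial\Omega_{0}$ is smooth and strictly convex, standard parabolic theory (the nonlocal factor $\theta(t)$ causes no difficulty) yields a unique smooth strictly convex solution on a maximal interval $[0,T^{*})$.

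I would then record the two structural facts that drive everything. Using the elementary identity $\langle y,\nu_{\Omega_{t}}(y)\rangle\,d\mathcal H^{n-1}(y)=\rho_{\Omega_{t}}^{n}(\xi)\,d\xi$ on $\partial\Omega_{t}$ one rewrites the numerator of $\theta(t)$ as $\int_{S^{n-1}}h\,\widetilde V_{\gamma,q}(\Omega_{t},\cdot)\det(h_{ij}+h\delta_{ij})\,dx$; combining this with the variational formula of Theorem~\ref{the11} (for $p>0$) and Theorem~\ref{the12} (for $p=0$) gives $\tfrac{d}{dt}I_{\gamma,q}(\Omega_{t})=0$, so $\theta(t)$ is chosen precisely so that the Gaussian chord integral is conserved. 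Set $\mathcal J(t)=\tfrac1p\int_{S^{n-1}}h^{p}f\,dx$ for $p>0$ and $\mathcal J(t)=\int_{S^{n-1}}(\log h)f\,dx$ for $p=0$. Differentiating $\mathcal J$ along the flow and applying the Cauchy--Schwarz inequality to the functions $\big(h\,\widetilde V_{\gamma,q}\det(h_{ij}+h\delta_{ij})\big)^{1/2}$ and $h^{p-1/2}f\,\big(\widetilde V_{\gamma,q}\det(h_{ij}+h\delta_{ij})\big)^{-1/2}$ (whose product integrates, via the conservation identity, to $\mathcal J'$ up to the term $-\theta\cdot(\text{their integrals})$) yields $\tfrac{d}{dt}\mathcal J(t)\le0$, with equality if and only if $\widetilde V_{\gamma,q}(\Omega_{t},\cdot)\,h^{1-p}\det(h_{ij}+h\delta_{ij})$ is a constant multiple of $f$ --- that is, exactly at a solution of \eqref{eq108}.

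The a priori estimates, uniform on $[0,T^{*})$, come next. Because $\Omega_{t}$ is origin-symmetric and $I_{\gamma,q}(\Omega_{t})\equiv I_{\gamma,q}(\Omega_{0})>0$, a Gaussian-weighted computation shows $\Omega_{t}$ cannot degenerate: were $\min h$ small, $\Omega_{t}$ would lie in a thin symmetric slab and $I_{\gamma,q}(\Omega_{t})$ would be small, so $\min_{S^{n-1}}h(\cdot,t)\ge r_{0}>0$ and $B(0,r_{0})\subset\Omega_{t}$. Monotonicity gives $\mathcal J(t)\le\mathcal J(0)$, and a cap argument bounding $\int h^{p}f$ (resp. $\int(\log h)f$) from below by a power (resp. the logarithm) of $\max h$ forces $\max_{S^{n-1}}h(\cdot,t)\le R_{0}$; convexity and being trapped between two balls give $|\nabla h|\le R_{0}$ automatically, whence $0<c_{1}\le\widetilde V_{\gamma,q}(\Omega_{t},\cdot)\le c_{2}$ and $0<\theta_{1}\le\theta(t)\le\theta_{2}$. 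The heart of the matter --- and the step I expect to be the main obstacle --- is the $C^{2}$ estimate: applying the maximum principle to auxiliary functions built from the principal radii (the largest eigenvalue of $(h_{ij}+h\delta_{ij})$ for the upper bound, the Gauss curvature or smallest eigenvalue for the lower bound, the latter keeping the equation parabolic) one aims at $0<c_{3}I\le(h_{ij}+h\delta_{ij})\le C_{3}I$. This computation forces control of the first spatial and temporal derivatives of $x\mapsto\widetilde V_{\gamma,q}(\Omega_{t},x)$ along the flow, which are bounded precisely because $q>2$: differentiating $\widetilde V_{\gamma,q}$ produces integrals of $|z-y|^{-(n-q+1)}$ over the $(n-1)$-dimensional hypersurface $\partial\Omega_{t}$ through $y$, and these converge exactly when $n-q+1<n-1$. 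Once the two-sided $C^{2}$ bound holds, the equation is uniformly parabolic and concave after taking the logarithm of the speed, so Krylov--Safonov and Evans--Krylov estimates with Schauder bootstrapping give uniform $C^{k,\alpha}$ bounds for all $k$; in particular $T^{*}=\infty$, and the flow stays smooth and strictly convex for all $t>0$, giving the existence and uniqueness part of the theorem.

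Finally, for convergence: $\mathcal J$ is non-increasing and, by the $C^{0}$ bounds, bounded below, so $\int_{0}^{\infty}(-\tfrac{d}{dt}\mathcal J)\,dt<\infty$ and there is $t_{k}\to\infty$ with $\tfrac{d}{dt}\mathcal J(t_{k})\to0$. By the uniform $C^{k,\alpha}$ bounds a subsequence of $\{\Omega_{t_{k}}\}$ converges in $C^{\infty}$ to a smooth, origin-symmetric, strictly convex body $\Omega_{\infty}$. Passing to the limit in $\tfrac{d}{dt}\mathcal J(t_{k})\to0$ forces equality in the above Cauchy--Schwarz inequality for $\Omega_{\infty}$, hence $\widetilde V_{\gamma,q}(\Omega_{\infty},\cdot)\,h_{\Omega_{\infty}}^{1-p}\det\big((h_{\Omega_{\infty}})_{ij}+h_{\Omega_{\infty}}\delta_{ij}\big)=\theta_{\infty}f$ with $\theta_{\infty}=\lim_{k}\theta(t_{k})\in(0,\infty)$; that is, $h_{\Omega_{\infty}}$ solves \eqref{eq108} with $\tau=\theta_{\infty}^{-1}>0$, which is the assertion.
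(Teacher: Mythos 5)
Your proposal follows essentially the same scheme as the paper: reduce to the scalar parabolic Monge--Amp\`ere equation (\ref{eq303}), prove conservation of $I_{\gamma,q}$ and monotonicity of the functional $\Phi$ via H\"older/Cauchy--Schwarz, obtain the $C^0$ bounds from these two facts (contradiction for the lower bound, a cap argument for the upper bound), get $C^1$ from $\rho^2=h^2+|\nabla h|^2$, derive two-sided $C^2$ bounds via Tso-type auxiliary functions using $q>2$ to control the derivatives of $\widetilde V_{\gamma,q}$, and conclude long-time existence plus subsequential $C^\infty$ convergence from the integrated monotonicity. The structure and every key lemma coincide with Sections~\ref{sec3}--\ref{sec5} of the paper, so this is the same route, just presented at a higher level of abstraction.
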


This paper is organized as follows. We collect background materials in Section \ref{sec2}. In Section \ref{sec3}, we give the parameterized form  of flow (\ref{eq109}) by support function and discuss properties of two important functionals along the flow (\ref{eq109}).
In Section \ref{sec4}, we give the priori estimates for the solution to the flow (\ref{eq109}). We obtain the
convergence of the flow and complete the proof of Theorem \ref{thm13} in Section \ref{sec5}.

\section{\bf Preliminaries}\label{sec2}
In this section, we give a brief review of some relevant notions about
convex bodies and recall some basic properties of convex hypersurfaces that readers may refer
to \cite{UR} and a book of Schneider \cite{SC}.
\subsection{Convex bodies} Let $\mathbb{R}^n$ be the $n$-dimensional Euclidean space, let $|z|=\sqrt{z\cdot z}$ be the Euclidean norm of $z$. The unit sphere in $\mathbb{R}^n $ is denoted by $S^{n-1}$, $\omega_n$ is the volume of the unit ball. For $k\in[0,n]$, $\mathcal{H}^k$ denotes the $k$-dimensional Hausdorff measure in $\mathbb{R}^n$. In integrals with respect to $\mathcal{H}^n$, we often abbreviate $d\mathcal{H}^n(z)$ by $dz$. Similarly, in integrals over the unite sphere $S^{n-1}$, instead of $d\mathcal{H}^{n-1}$ we write $du$. $\mathcal{H}^k_\gamma$ is the $k$-dimensional Hausdorff measure with respect to Gaussian density function $f(z)=e^{-|z^2|/2}$, $z\in\mathbb{R}^n$.

Assume that $\partial\Omega$ be a smooth, closed and strictly convex hypersurface containing the origin in its interior. The support function of convex body $\Omega$ is defined by
\begin{align*}h_\Omega(\xi)=h(\Omega,\xi)=\max\{\xi\cdot y:y\in\Omega\},\quad \forall\xi\in S^{n-1},\end{align*}
and the radial function of $\Omega$ with respect to $z\in\mathbb{R}$ is defined by
\begin{align*}\rho_{\Omega,z}(v)=\rho((\Omega,z),v)=\max\{c>0:cv+z\in\Omega\},\quad  v\in S^{n-1}.\end{align*}

For a compact convex subset $\Omega\in \mathcal{K}^n$ and $v\in S^{n-1}$, the intersection of a
supporting hyperplane with $\Omega$, $H(\Omega,v)$ at $v$ is given by
\begin{align*}H(\Omega,v)=\{y\in \Omega:y\cdot v=h_\Omega(v)\}.\end{align*}
A boundary point of $\Omega$ which only has one supporting hyperplane is called a regular point, otherwise, it is a singular point. The set of singular points is denoted as $\sigma \Omega$, it is
well known that $\sigma \Omega$ has spherical Lebesgue measure 0.

For $y\in\partial \Omega\setminus \sigma \Omega$, its Gauss map $\nu_\Omega:y\in\partial \Omega\setminus \sigma \Omega\rightarrow S^{n-1}$ is represented by
\begin{align*}\nu_\Omega(y)=\{v\in S^{n-1}:y\cdot v=h_\Omega(v)\}.\end{align*}
Correspondingly, for a Borel set $\eta\subset S^{n-1}$, its inverse Gauss map is denoted by
$\nu_\Omega^{-1}$,
\begin{align*}\nu_\Omega^{-1}(\eta)=\{y\in\partial \Omega:\nu_\Omega(y)\in\eta\}.\end{align*}
Specially, for a convex hypersurface $\partial\Omega$ of class $C^2$, then, the support function of Ω can be stated as
\begin{align*}
h(\Omega,x)=x\cdot \nu^{-1}(x)=\nu(X(x))\cdot X(x), \quad X(x)\in \partial\Omega.
\end{align*}
Moreover, the gradient of $h(\Omega, \cdot)$ satisfies
\begin{align*}
\nabla h(\Omega,x)=\nu^{-1}(x)=X(x).
\end{align*}
For the Borel set $\eta\subset S^{n-1}$, its surface area measure is defined as
\begin{align*}S_\Omega(\eta)=\mathcal{H}^{n-1}(\nu_\Omega^{-1}(\eta)).\end{align*}

\subsection{Convex hypersurface }~~Suppose that $\Omega$ is parameterized by the inverse
Gauss map $X:S^{n-1}\rightarrow \Omega$, that is $X(x)=\nu_\Omega^{-1}(x)$. Then, the support function $h$ of $\Omega$ can be computed by
\begin{align}\label{eq202}h(x)=x\cdot X(x) , \ \ x\in S^{n-1},\end{align}
where $x$ is the outer normal of $\Omega$ at $X(x)$. Let $\{e_1, e_2, \cdots, e_{n-1}\}$ be an orthonormal frame on $S^{n-1}$, denote  $e_{ij}$ by the standard metric on the sphere $S^{n-1}$.
Differentiating (\ref{eq202}), there has
\begin{align*}\nabla_ih=\nabla_ix\cdot X(x)+ x\cdot \nabla_iX(x),\end{align*}
since $\nabla_iX(x)$ is tangent to $\Omega$ at $X(x)$, thus,
\begin{align*}\nabla_ih=\nabla_ix\cdot X(x).\end{align*}

By differentiating (\ref{eq202}) twice, the second fundamental form $A_{ij}$ of $\Omega$ can be computed in terms of the support function,
\begin{align}\label{eq203}A_{ij} = \nabla_{ij}h + he_{ij},\end{align}
where $\nabla_{ij}=\nabla_i\nabla_j$ denotes the second order covariant derivative with respect to $e_{ij}$. The induced metric matrix $g_{ij}$ of $\Omega$ can be derived by Weingarten's formula,
\begin{align}\label{eq204}e_{ij}=\nabla_ix\cdot \nabla_jx= A_{ik}A_{lj}g^{kl}.\end{align}
The principal radii of curvature are the eigenvalues of the matrix $b_{ij} = A^{ik}g_{jk}$.
When considering a smooth local orthonormal frame on $S^{n-1}$, by virtue of (\ref{eq203})
and (\ref{eq204}), there has
\begin{align}\label{eq205}b_{ij} = A_{ij} = \nabla_{ij}h + h\delta_{ij}.\end{align}
Then, the Gauss curvature of $X(x)\in\Omega$ is given by
\begin{align}\label{eq206}\mathcal{K}(x) = (\det (\nabla_{ij}h + h\delta_{ij} ))^{-1}.\end{align}

\section{\bf Geometric flow and its associated functionals}\label{sec3}
In this section, we shall introduce the geometric flow and its associated functionals for solving the $L_p$-Gaussian chord Minkowski problem. For convenience, the Gauss curvature flow is restated
here. Let $\partial\Omega_0$ be a smooth, closed and origin symmetric strictly convex hypersurface in $\mathbb{R}^n$ and $f$ be a positive
smooth even function on $S^{n-1}$. We consider the following Gauss curvature flow
\begin{align}\label{eq301}
\left\{
\begin{array}{lc}
\frac{\partial X(x,t)}{\partial t}=-\theta(t)\frac{\langle X,v\rangle^p\mathcal{K}
(x,t)f(v)}{\widetilde{V}_{\gamma,q}(\Omega_t,\cdot)}v+X(x,t),  \\
X(x,0)=X_0(x),\\
\end{array}
\right.
\end{align}
where $\mathcal{K}(x,t)$ is the Gauss curvature of the hypersurface $\partial\Omega_t$ at $X(\cdot,t)$, $v=x$ is the unit outer
normal vector of $\partial\Omega_t$ at $X(\cdot,t)$, $\langle X,v\rangle$ represents standard inner product of $X$ and $v$, and $\theta(t)$ is given by
\begin{align}\label{eq302}\theta(t)=\frac{\int_{S^{n-1}}\widetilde{V}_{\gamma,q}(\Omega_t,\cdot)\rho^n(\xi,t)d\xi}{\int_{S^{n-1}}h^p(x,t)f(x)dx}.\end{align}

Taking the scalar product of both sides of the equation and of the initial condition in
(\ref{eq301}) by $v$, by means of the definition of support function (\ref{eq202}), we describe the flow equation
associated with the support function as follows
\begin{align}\label{eq303}
\left\{
\begin{array}{lc}
\frac{\partial h(x,t)}{\partial t}=-\theta(t)\frac{h^p\mathcal{K}
(x,t)f(x)}{\widetilde{V}_{\gamma,q}([h],\cdot)}+h(x,t),  \\
h(x,0)=h_0(x).\\
\end{array}
\right.\end{align}

Next, we investigate the characteristic of Guassian chord integral $I_{\gamma,q}(\Omega_t)$ along the flow (\ref{eq301}). Let's list a fact firstly (see e.g. \cite{LY}).

\begin{align}\label{eq304}\frac{1}{\rho(\xi,t)}\frac{\partial\rho(\xi,t)}{\partial t}=\frac{1}{h(x,t)}\frac{\partial
h(x,t)}{\partial t}.\end{align}

\begin{lemma}\label{lem31} For $q>1$, $p\in \mathbb{R}$, the $I_{\gamma,q}(\Omega_t)$ is  unchanged with regard to Eq. (\ref{eq303}), namely,
\begin{align*}\frac{\partial}{\partial t}I_{\gamma,q}(\Omega_t)= 0.\end{align*}
\end{lemma}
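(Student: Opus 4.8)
The goal is to show $\frac{d}{dt}I_{\gamma,q}(\Omega_t)=0$ along the flow. The natural strategy is to use the variational formulas of Theorem~\ref{the11} and Theorem~\ref{the12}, which express $\frac{d}{dt}I_{\gamma,q}(K_t)$ as an integral of the perturbation against the measure $G_{\gamma,p,q}(K,\cdot)$. First I would observe that, by \eqref{eq303}, the support function evolves by $\partial_t h = -\theta(t)\frac{h^p\mathcal{K}f}{\widetilde V_{\gamma,q}([h],\cdot)}+h$. The term $+h(x,t)$ corresponds to a homothety (dilation by $e^t$) which, because it is generated by an infinitesimal scaling $h\mapsto h_t = (1+t)h + o(t)$, fits the $L_p=L_1$ (or the log) perturbation framework with $g$ essentially proportional to $h$. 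So the plan is to compute $\frac{d}{dt}I_{\gamma,q}(\Omega_t)$ directly from the general chain rule, valid for any smooth family of strictly convex bodies, that
\begin{align*}
\frac{d}{dt}I_{\gamma,q}(\Omega_t)=\int_{S^{n-1}}\partial_t h(x,t)\,\widetilde V_{\gamma,q}(\Omega_t,\cdot)\,\mathcal{K}(x,t)^{-1}\,dx,
\end{align*}
which is the $p=1$ specialization of \eqref{eq102}/\eqref{eq103} written in terms of an arbitrary perturbation $g=\partial_t h$ (here $\nu_K^{-1}$ is parameterized by $x\in S^{n-1}$ and $d\mathcal H^{n-1}(y)=\mathcal K^{-1}dx$, with $h_K(\nu_K(y))^{1-p}=h^0=1$).

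Next I would substitute the evolution equation for $\partial_t h$ into this integral. This yields two pieces:
\begin{align*}
\frac{d}{dt}I_{\gamma,q}(\Omega_t)=-\theta(t)\int_{S^{n-1}}\frac{h^p\mathcal{K}f}{\widetilde V_{\gamma,q}([h],\cdot)}\,\widetilde V_{\gamma,q}(\Omega_t,\cdot)\,\mathcal{K}^{-1}\,dx+\int_{S^{n-1}}h(x,t)\,\widetilde V_{\gamma,q}(\Omega_t,\cdot)\,\mathcal{K}^{-1}\,dx.
\end{align*}
In the first integral the factors $\widetilde V_{\gamma,q}$ and $\mathcal K$ cancel, leaving $-\theta(t)\int_{S^{n-1}}h^p f\,dx$, which by the definition \eqref{eq302} of $\theta(t)$ equals $-\int_{S^{n-1}}\widetilde V_{\gamma,q}(\Omega_t,\cdot)\rho^n(\xi,t)\,d\xi$. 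For the second integral I would use the standard identity relating the integral of $h\,\mathcal K^{-1}$ over $S^{n-1}$ (with a weight) to the integral of $\rho^n$ over $S^{n-1}$; concretely, the map $y\mapsto y/|y|$ between $\partial\Omega_t$ and $S^{n-1}$ together with $h\,d\mathcal H^{n-1}(y) = $ (cone-volume-type element) gives $\int_{S^{n-1}}h\,w\,\mathcal K^{-1}dx=\int_{S^{n-1}}\widetilde V_{\gamma,q}(\Omega_t,\cdot)\,\rho^n(\xi,t)\,d\xi$ once one checks that $\widetilde V_{\gamma,q}$, as a function on $\partial\Omega_t$, transforms correctly under the radial/Gauss correspondence (using \eqref{eq304} only if one works with the radial parameterization). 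Adding the two pieces then gives exactly $0$.

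\textbf{Main obstacle.} The delicate point is the second integral: one must justify that
\begin{align*}
\int_{S^{n-1}}h(x,t)\,\widetilde V_{\gamma,q}(\Omega_t,\cdot)\,\mathcal{K}(x,t)^{-1}\,dx=\int_{S^{n-1}}\widetilde V_{\gamma,q}(\Omega_t,\cdot)\,\rho^n(\xi,t)\,d\xi,
\end{align*}
i.e. that the weight $\widetilde V_{\gamma,q}$ is the same function of the boundary point whether one integrates via the Gauss map ($dx$, with Jacobian $\mathcal K^{-1}$ and the extra factor $h$ coming from the cone decomposition of the body) or via the radial map ($d\xi$, with the $\rho^n$ factor). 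Since $\widetilde V_{\gamma,q}(\Omega_t,y)$ in \eqref{eq104} depends only on the boundary point $y\in\partial\Omega_t$ (not on any parameterization), this is the classical change-of-variables between the two parameterizations of $\partial\Omega_t$; the needed statement is that for any integrable $F$ on $\partial\Omega_t$ one has $\int_{\partial\Omega_t}F\,h(\nu(y))\,d\mathcal H^{n-1}(y)=\int_{\partial\Omega_t}F\,\frac{1}{|y|}\langle y,\nu(y)\rangle\,d\mathcal H^{n-1}(y)$ rewritten against $d\xi$ via the radial map. I would handle this by recording the identity in a short auxiliary computation (or citing the analogous step in \cite{LY} or \cite{ZX}), and then the cancellation with $\theta(t)$ finishes the proof. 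The first integral's cancellation is purely algebraic and presents no difficulty; likewise the differentiation-under-the-integral / chain-rule step is routine given that $\Omega_t$ stays smooth and strictly convex on the (a priori, short) time interval of existence.
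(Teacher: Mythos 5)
Your proposal is correct and follows essentially the same route as the paper: the paper likewise reduces $\frac{\partial}{\partial t}I_{\gamma,q}(\Omega_t)$ to $\int_{S^{n-1}}\widetilde V_{\gamma,q}\,\partial_t h\,\mathcal K^{-1}\,dx$ (obtained by differentiating the polar-coordinate form of $I_{\gamma,q}(\Omega_t)=\frac12\int_{\Omega_t}\widetilde V_{\gamma,q}\,dy$ rather than by quoting Theorem~\ref{the11} with $p=1$), then substitutes the flow equation and cancels using the definition of $\theta(t)$ together with the identity $\rho^n\mathcal K\,d\xi=h\,dx$, which is exactly your ``delicate'' second-term identity. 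One cosmetic slip: in your auxiliary reformulation the weight should be $h(\nu(y))=\langle y,\nu(y)\rangle$, not $\tfrac{1}{|y|}\langle y,\nu(y)\rangle$; your displayed identity $\int_{S^{n-1}}h\,\widetilde V_{\gamma,q}\,\mathcal K^{-1}\,dx=\int_{S^{n-1}}\widetilde V_{\gamma,q}\,\rho^n\,d\xi$ is the correct statement and is precisely what the paper invokes.
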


\begin{proof}
Let $h(\cdot,t)$ and $\rho(\cdot,t)$ be the support function and radial function of $\Omega_t$, respectively. From (\ref{eq101}) and (\ref{eq102}), we can derive
\begin{align}\label{eq305}
\nonumber I_{\gamma,q}(K)=&\int_K\int_K\frac{e^{-(|z|^2+|y|^2)/2}}{|z-y|^{n-q+1}}dzdy\\
\nonumber=&\int_Ke^{-|y|^2/2}\int_K\frac{e^{-|z|^2/2}}{|z-y|^{n-q+1}}dzdy\\
=&\frac{1}{2}\int_{K}\widetilde{V}_{\gamma,q}(K,y)dy.
\end{align}

Therefore, applying polar coordinates to (\ref{eq305}), by (\ref{eq302}), (\ref{eq303}), (\ref{eq304}) and $\rho^n\mathcal{K}d\xi=hdx$, we have
\begin{align*}
\frac{\partial}{\partial t}I_{\gamma,q}(\Omega_t)=&\frac{\partial}{\partial t}\bigg(\frac{1}{2}\int_{\Omega_t}\widetilde{V}_{\gamma,q}(\Omega_t,y)dy\bigg)\\
=&\frac{1}{2}\frac{\partial}{\partial t}\bigg(\int_{S^{n-1}}\int_0^{\rho(\xi,t)}\widetilde{V}_{\gamma,q}(\Omega_t,\rho(\xi,t))\rho^{n-1}d\rho d\xi\bigg)\\
=&\frac{1}{2}\int_{S^{n-1}}\widetilde{V}_{\gamma,q}(\Omega_t,\rho(\xi,t))\rho^{n-1}\frac{\partial \rho}{\partial t}d\xi\\
=&\frac{1}{2}\int_{S^{n-1}}\widetilde{V}_{\gamma,q}(\Omega_t,\rho(\xi,t))\frac{\rho^{n}\mathcal{K}}{\mathcal{K}\rho}\frac{\partial \rho}{\partial t}d\xi\\
=&\frac{1}{2}\int_{S^{n-1}}\widetilde{V}_{\gamma,q}(\Omega_t,\rho(\xi,t))\frac{h}{\mathcal{K}h}\frac{\partial h}{\partial t}dx\\
=&\frac{1}{2}\int_{S^{n-1}}\widetilde{V}_{\gamma,q}\bigg(-\theta(t)\frac{\mathcal{K}h^pf(x)}{\widetilde{V}_{\gamma,q}}+h\bigg)\frac{1}{\mathcal{K}}dx\\
=&\frac{1}{2}\bigg(-\frac{\int_{S^{n-1}}\frac{\widetilde{V}_{\gamma,q}h}{\mathcal{K}}dx}{\int_{S^{n-1}}h^pfdx}\int_{S^{n-1}}h^pf(x)dx
+\int_{S^{n-1}}\frac{\widetilde{V}_{\gamma,q}h}{\mathcal{K}}dx\bigg)\\
=&\frac{1}{2}\bigg(-\int_{S^{n-1}}\frac{\widetilde{V}_{\gamma,q}h}{\mathcal{K}}+\int_{S^{n-1}}\frac{\widetilde{V}_{\gamma,q}h}{\mathcal{K}}dx\bigg)\\
=&0.
\end{align*}
\end{proof}

For the convenience of discussing Gauss curvature flow (\ref{eq301}), we introduce a following functional for any $t\geq 0$,
\begin{align}\label{eq306}
\Phi(\Omega_t)=\frac{1}{p}\int_{S^{n-1}}f(x)h^p(x,t)dx,
\end{align}
where $h(\cdot,t)$ is the support function of $\Omega_t$ and $p\neq 0$. When $p=0$, we write
\begin{align}\label{eq307}
\Phi(\Omega_t)=\int_{S^{n-1}}\log h(x,t)f(x)dx.
\end{align}

\begin{lemma}\label{lem32}
The functional (\ref{eq306}) and (\ref{eq307}) are non-increasing along the flow (\ref{eq301}) for any $t\geq 0$. That is, $\frac{\partial}{\partial t}\Phi(\Omega_t)\leq0$.
\end{lemma}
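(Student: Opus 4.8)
The plan is to differentiate $\Phi(\Omega_t)$ directly in $t$, substitute the evolution equation (\ref{eq303}) for $\partial_t h$, rewrite the normalizing constant $\theta(t)$ in the Gauss-map parametrization using the identity $\rho^n\mathcal{K}\,d\xi = h\,dx$, and then recognize that the sign of $\partial_t\Phi(\Omega_t)$ is governed by a single application of the Cauchy--Schwarz inequality. Concretely, for $p\neq 0$ I would first compute
\[
\frac{d}{dt}\Phi(\Omega_t) = \int_{S^{n-1}} f(x)\, h^{p-1}\,\partial_t h\, dx
\]
and insert $\partial_t h = -\theta(t)\,h^p\mathcal{K} f/\widetilde{V}_{\gamma,q} + h$ from (\ref{eq303}) to obtain
\[
\frac{d}{dt}\Phi(\Omega_t) = -\,\theta(t)\int_{S^{n-1}}\frac{f^2 h^{2p-1}\mathcal{K}}{\widetilde{V}_{\gamma,q}}\,dx + \int_{S^{n-1}} f h^p\, dx .
\]
Using $\rho^n\mathcal{K}\,d\xi = h\,dx$ exactly as in the proof of Lemma \ref{lem31}, the numerator of $\theta(t)$ in (\ref{eq302}) becomes $\int_{S^{n-1}}\widetilde{V}_{\gamma,q}\,h/\mathcal{K}\,dx$, so that, after clearing the positive factor $\int_{S^{n-1}} f h^p\,dx$, the assertion $\partial_t\Phi(\Omega_t)\le 0$ is equivalent to
\[
\Big(\int_{S^{n-1}} f h^p\, dx\Big)^2 \;\le\; \int_{S^{n-1}}\widetilde{V}_{\gamma,q}\frac{h}{\mathcal{K}}\,dx \,\cdot\, \int_{S^{n-1}}\frac{f^2 h^{2p-1}\mathcal{K}}{\widetilde{V}_{\gamma,q}}\,dx .
\]
This is precisely Cauchy--Schwarz applied with the factorization $f h^p = \big(\widetilde{V}_{\gamma,q}\,h/\mathcal{K}\big)^{1/2}\big(f^2 h^{2p-1}\mathcal{K}/\widetilde{V}_{\gamma,q}\big)^{1/2}$, which is legitimate since $\widetilde{V}_{\gamma,q}>0$, $\mathcal{K}>0$, $h>0$ and $f>0$ hold along the flow.

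The case $p=0$ runs in the same way: differentiating $\Phi(\Omega_t)=\int_{S^{n-1}} f\log h\,dx$ gives $\frac{d}{dt}\Phi(\Omega_t) = \int_{S^{n-1}}(f/h)\,\partial_t h\, dx = -\theta(t)\int_{S^{n-1}} f^2\mathcal{K}/(h\widetilde{V}_{\gamma,q})\,dx + \int_{S^{n-1}} f\,dx$ (now with $h^p\equiv 1$), and since here $\theta(t)=\int_{S^{n-1}}\widetilde{V}_{\gamma,q}h/\mathcal{K}\,dx\big/\int_{S^{n-1}} f\,dx$, the desired bound again follows from Cauchy--Schwarz with $f = \big(\widetilde{V}_{\gamma,q}h/\mathcal{K}\big)^{1/2}\big(f^2\mathcal{K}/(h\widetilde{V}_{\gamma,q})\big)^{1/2}$.

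I do not expect a serious obstacle here; the computation is essentially routine once it is set up. The only delicate point is the change of variables between the radial parametrization — in which $\theta(t)$ and $\widetilde{V}_{\gamma,q}(\Omega_t,\cdot)$ are originally written with the boundary point $y=\rho(\xi,t)\xi$ — and the Gauss-map parametrization $y=X(x,t)$, effected by $\rho^n\mathcal{K}\,d\xi = h\,dx$, so that every integral lives on $S^{n-1}$ against the same variable and the Cauchy--Schwarz pairing genuinely makes sense. It is also worth recording that equality in the above inequality holds exactly when $\widetilde{V}_{\gamma,q}\,h^{1-p}\det(\nabla_{ij}h+h\delta_{ij})$ is a constant multiple of $f$, i.e. precisely when $h(\cdot,t)$ solves (\ref{eq108}); this monotonicity, together with the constancy of $I_{\gamma,q}(\Omega_t)$ from Lemma \ref{lem31}, is what will later drive the convergence of the flow to a solution.
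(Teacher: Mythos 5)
Your proposal is correct and matches the paper's proof essentially line for line: differentiate $\Phi(\Omega_t)$, substitute $\partial_t h$ from (\ref{eq303}), rewrite the numerator of $\theta(t)$ via $\rho^n\mathcal{K}\,d\xi=h\,dx$, and close with Cauchy--Schwarz (the paper phrases it as H\"older with exponent $2$, which is the same inequality) using exactly the factorization you indicate, in both the $p\neq 0$ and $p=0$ cases. Your remark on the equality case is also the same observation the paper records at the end of its proof.
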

\begin{proof}
Firstly, we prove $p\neq 0$, by (\ref{eq306}), (\ref{eq302}), (\ref{eq303}), (\ref{eq304}) and $\rho^n\mathcal{K}d\xi=hdx$, we obtain the following result,
\begin{align*}
\frac{\partial}{\partial t}\Phi_p(\Omega_t)=&\int_{S^{n-1}}h^{p-1}f(x)\frac{\partial h}{\partial t}dx\\
=&\int_{S^{n-1}}h^{p-1}f(x)\bigg(-\theta(t)\frac{\mathcal{K}h^pf(x)}{\widetilde{V}_{\gamma,q}}+h\bigg)dx\\
=&-\frac{\int_{S^{n-1}}\widetilde{V}_{\gamma,q}\frac{h}{\mathcal{K}}dx}{\int_{S^{n-1}}h^pf(x)dx}\int_{S^{n-1}}h^{2p-1}f^2(x)\mathcal{K}(\widetilde{V}_{\gamma,q})^{-1}dx+
\int_{S^{n-1}}h^pf(x)dx\\
=&\bigg(\int_{S^{n-1}}h^pf(x)dx\bigg)^{-1}\bigg\{-\int_{S^{n-1}}\widetilde{V}_{\gamma,q}\frac{h}{\mathcal{K}}dx\int_{S^{n-1}}f^2h^{2p-1}
\mathcal{K}(\widetilde{V}_{\gamma,q})^{-1}dx\\
&+\bigg(\int_{S^{n-1}}f(x)h^pdx\bigg)^2\bigg\}\\
=&\bigg(\int_{S^{n-1}}h^pf(x)dx\bigg)^{-1}\bigg\{-\bigg[\bigg(\int_{S^{n-1}}\widetilde{V}_{\gamma,q}\frac{h}{\mathcal{K}}dx\bigg)^{\frac{1}{2}}\bigg(\int_{S^{n-1}}f^2h^{2p-1}
\mathcal{K}(\widetilde{V}_{\gamma,q})^{-1}dx\bigg)^\frac{1}{2}\bigg]^2\\
&+\bigg(\int_{S^{n-1}}f(x)h^pdx\bigg)^2\bigg\}\\
=&\bigg(\int_{S^{n-1}}h^pf(x)dx\bigg)^{-1}\bigg\{-\bigg[\bigg(\int_{S^{n-1}}\bigg(\bigg(\widetilde{V}_{\gamma,q}\frac{h}{\mathcal{K}}\bigg)^\frac{1}{2}\bigg)^2dx
\bigg)^{\frac{1}{2}}\\
&\bigg(\int_{S^{n-1}}\bigg(\bigg(f^2h^{2p-1}
\mathcal{K}(\widetilde{V}_{\gamma,q})^{-1}\bigg)^\frac{1}{2}\bigg)^2dx\bigg)^\frac{1}{2}\bigg]^2+\bigg(\int_{S^{n-1}}f(x)h^pdx\bigg)^2\bigg\}\\
\leq&\bigg(\int_{S^{n-1}}h^pf(x)dx\bigg)^{-1}\bigg\{-\bigg[\int_{S^{n-1}}\bigg(\widetilde{V}_{\gamma,q}\frac{h}{\mathcal{K}}\bigg)^\frac{1}{2}\bigg(f^2h^{2p-1}
\mathcal{K}(\widetilde{V}_{\gamma,q})^{-1}\bigg)^\frac{1}{2}dx\bigg]^2\\
&+\bigg(\int_{S^{n-1}}f(x)h^pdx\bigg)^2\bigg\}\\
=&0.
\end{align*}
By the equality condition of H\"{o}lder inequality, we know that the above equality holds if and only if $\tau_1\bigg(\widetilde{V}_{\gamma,q}\frac{h}{\mathcal{K}}\bigg)^\frac{1}{2}=\bigg(f^2h^{2p-1}
\mathcal{K}(\widetilde{V}_{\gamma,q})^{-1}\bigg)^\frac{1}{2}$, i.e.,
\begin{align*}
\tau_1\widetilde{V}_{\gamma,q}h^{1-p}\det(h_{ij}+h\delta_{ij})=f.
\end{align*}

Next, we give proof of $p=0$. Taking $p=0$ in (\ref{eq302}) and (\ref{eq303}), from  (\ref{eq307}), (\ref{eq304}) and $\rho^n\mathcal{K}d\xi=hdx$, we deduce that
\begin{align*}
\frac{\partial}{\partial t}\Phi(\Omega_t)=&\int_{S^{n-1}}\frac{f(x)}{h}\frac{\partial h}{\partial t}dx\\
=&\int_{S^{n-1}}\frac{f(x)}{h}\bigg(-\theta(t)\frac{\mathcal{K}f(x)}{\widetilde{V}_{\gamma,q}}+h\bigg)dx\\
=&-\frac{\int_{S^{n-1}}\widetilde{V}_{\gamma,q}\frac{h}{\mathcal{K}}dx}{\int_{S^{n-1}}f(x)dx}\int_{S^{n-1}}\frac{f^2(x)}{h\widetilde{V}_{\gamma,q}}\mathcal{K}dx+
\int_{S^{n-1}}f(x)dx\\
=&\bigg(\int_{S^{n-1}}f(x)dx\bigg)^{-1}\bigg\{-\int_{S^{n-1}}\widetilde{V}_{\gamma,q}\frac{h}{\mathcal{K}}dx\int_{S^{n-1}}\frac{f^2}{h\widetilde{V}_{\gamma,q}}
\mathcal{K}dx+\bigg(\int_{S^{n-1}}f(x)dx\bigg)^2\bigg\}\\
=&\bigg(\int_{S^{n-1}}f(x)dx\bigg)^{-1}\bigg\{-\bigg[\bigg(\int_{S^{n-1}}\widetilde{V}_{\gamma,q}\frac{h}{\mathcal{K}}dx\bigg)^{\frac{1}{2}}\bigg(\int_{S^{n-1}}
\frac{f^2}{h\widetilde{V}_{\gamma,q}}\mathcal{K}dx\bigg)^\frac{1}{2}\bigg]^2\\
&+\bigg(\int_{S^{n-1}}f(x)dx\bigg)^2\bigg\}\\
=&\bigg(\int_{S^{n-1}}f(x)dx\bigg)^{-1}\bigg\{-\bigg[\bigg(\int_{S^{n-1}}\bigg(\bigg(\widetilde{V}_{\gamma,q}\frac{h}{\mathcal{K}}\bigg)^\frac{1}{2}\bigg)^2dx
\bigg)^{\frac{1}{2}}\bigg(\int_{S^{n-1}}\bigg(\bigg(\frac{f^2}{h\widetilde{V}_{\gamma,q}}
\mathcal{K}\bigg)^\frac{1}{2}\bigg)^2dx\bigg)^\frac{1}{2}\bigg]^2\\
&+\bigg(\int_{S^{n-1}}f(x)dx\bigg)^2\bigg\}\\
\leq&\bigg(\int_{S^{n-1}}f(x)dx\bigg)^{-1}\bigg\{-\bigg[\int_{S^{n-1}}\bigg(\widetilde{V}_{\gamma,q}\frac{h}{\mathcal{K}}\bigg)^\frac{1}{2}\bigg(\frac{f^2}
{h\widetilde{V}_{\gamma,q}}\mathcal{K}\bigg)^\frac{1}{2}dx\bigg]^2+\bigg(\int_{S^{n-1}}f(x)dx\bigg)^2\bigg\}\\
=&0.
\end{align*}
We know that the above equality holds if and only if $\tau_2\bigg(\widetilde{V}_{\gamma,q}\frac{h}{\mathcal{K}}\bigg)^\frac{1}{2}=\bigg(\frac{f^2}{h\widetilde{V}_{\gamma,q}}\mathcal{K}\bigg)^\frac{1}{2}$ by the equality condition of H\"{o}lder inequality, i.e.,
\begin{align*}
\tau_2\widetilde{V}_{\gamma,q}h\det(h_{ij}+h\delta_{ij})=f.
\end{align*}

Combining the above two situations, $\Omega_t$ satisfies (\ref{eq108}) and (\ref{eq108+}) with $\frac{1}{\tau_1}=\theta(t)(p\neq 0)$ and $\frac{1}{\tau_2}=\theta(t)(p=0)$. This completes proof of Lemma \ref{lem32}.
\end{proof}

\section{\bf Priori estimates}\label{sec4}

In this section, we establish the $C^0, C^1$ and $C^2$ estimates for the solutions to Eq. (\ref{eq303}). In the following of this paper, we always assume that $\partial\Omega_0$ is a smooth, closed and origin-symmetric strictly convex hypersurface in $\mathbb{R}^n$, $h:S^{n-1}\times [0,T)\rightarrow \mathbb{R}$ is a smooth even solution to Eq. (\ref{eq303}) with the initial $h(\cdot,0)$ the support function of $\Omega_0$. Here, $T$ is the maximal time for which the smooth solution exists to Eq. (\ref{eq303}).

\subsection{$C^0, C^1$  estimates}

In order to complete the $C^0$ estimate, we firstly need to introduce the following Lemma which was proven by Chen and Li \cite{CH} for convex bodies.
\begin{lemma}\label{lem41}\cite[Lemma 2.6]{CH}
Let $\Omega\in\mathcal{K}^n_o$, $h$ and $\rho$ be respectively support
function and radial function of $\Omega$, and $x_{\max}$ and $\xi_{\min}$ be two points such that
$h(x_{\max})=\max_{S^{n-1}}h$ and $\rho(\xi_{\min})=\min_{S^{n-1}}\rho$. Then,
\begin{align*}
\max_{S^{n-1}}h=&\max_{S^{n-1}}\rho \quad \text{and} \quad \min_{S^{n-1}}h=\min_{S^{n-1}}\rho,\end{align*}
\begin{align*}h(x)\geq& x\cdot x_{\max}h(x_{\max}),\quad \forall x\in S^{n-1},\end{align*}
\begin{align*}\rho(\xi)\xi\cdot\xi_{\min}\geq&\rho(\xi_{\min}),\quad \forall \xi\in S^{n-1}.\end{align*}
\end{lemma}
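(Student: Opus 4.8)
The plan is to derive all three assertions from two elementary facts about a convex body $\Omega$ with $o$ in its interior: that $\max_{S^{n-1}}\rho$ is the circumradius of $\Omega$ about $o$ (the distance from $o$ to a farthest boundary point), and that $\min_{S^{n-1}}\rho$ is the inradius of $\Omega$ about $o$ (the radius of the largest ball centered at $o$ contained in $\Omega$). First I would prove the two equalities in the statement. Since $\xi\mapsto\rho(\xi)\xi$ traces out $\partial\Omega$, we have $\max_{S^{n-1}}\rho=\max_{z\in\partial\Omega}|z|=\max_{z\in\Omega}|z|=:R$, the last equality because $z\mapsto|z|$ is convex and so attains its maximum over $\Omega$ on $\partial\Omega$. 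By Cauchy--Schwarz, $h(u)=\max_{y\in\Omega}\langle u,y\rangle\le\max_{y\in\Omega}|y|=R$ for every $u\in S^{n-1}$, while if $y^{\ast}\in\Omega$ realizes $|y^{\ast}|=R$ then $h(y^{\ast}/R)\ge\langle y^{\ast}/R,y^{\ast}\rangle=R$; hence $\max_{S^{n-1}}h=R=\max_{S^{n-1}}\rho$. For the minima, note that $\bar B_{t}(o)\subseteq\Omega$ is equivalent to $t\xi\in\Omega$ for all $\xi\in S^{n-1}$, i.e.\ to $t\le\rho(\xi)$ for all $\xi$, i.e.\ to $t\le\min_{S^{n-1}}\rho$; it is also equivalent to $h(\bar B_{t}(o),\cdot)=t\le h(\Omega,\cdot)$, i.e.\ to $t\le\min_{S^{n-1}}h$. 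So $\min_{S^{n-1}}h=\min_{S^{n-1}}\rho$, both equal to the inradius about $o$.

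Next I would prove the second displayed inequality by showing that the point $z_{0}:=h(x_{\max})\,x_{\max}$ lies in $\Omega$. Choose $y^{\ast}\in\Omega$ with $\langle x_{\max},y^{\ast}\rangle=h(x_{\max})$; by the first part $h(x_{\max})=\max_{S^{n-1}}h=R$, so $R=\langle x_{\max},y^{\ast}\rangle\le|y^{\ast}|\le R$, which forces $|y^{\ast}|=R$ and, by the equality case of Cauchy--Schwarz, $y^{\ast}=R\,x_{\max}=z_{0}$. Hence $z_{0}=y^{\ast}\in\Omega$, and for every $x\in S^{n-1}$, $h(x)=\max_{y\in\Omega}\langle x,y\rangle\ge\langle x,z_{0}\rangle=h(x_{\max})\,\langle x,x_{\max}\rangle$, as required.

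For the third display I would dualize the previous step, using the boundary point of $\Omega$ nearest $o$ along $\xi_{\min}$, namely $w_{0}:=\rho(\xi_{\min})\,\xi_{\min}\in\partial\Omega$. Set $r:=\rho(\xi_{\min})$; by the first part $\bar B_{r}(o)\subseteq\Omega$, and $w_{0}$ lies on the boundary of both bodies, so comparing a supporting halfspace $\{\,y:\langle y,\nu\rangle\le h(\nu)\,\}$ of $\Omega$ at $w_{0}$ with this ball gives $r=\max_{\bar B_{r}(o)}\langle\cdot,\nu\rangle\le h(\nu)=\langle w_{0},\nu\rangle=r\,\langle\xi_{\min},\nu\rangle\le r$; the equality case then forces $\nu=\xi_{\min}$ and $h(\xi_{\min})=r$. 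Therefore $\langle y,\xi_{\min}\rangle\le r$ for every $y\in\Omega$, and evaluating at $y=\rho(\xi)\xi\in\Omega$ yields $\rho(\xi)\,\langle\xi,\xi_{\min}\rangle\le\rho(\xi_{\min})$ for all $\xi\in S^{n-1}$ (the inequality dual to the second display; equivalently, this is the second display applied to the polar body $\Omega^{\ast}$, whose support function is $1/\rho_{\Omega}$ and whose maximizing direction is $\xi_{\min}$).

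I do not expect a genuine obstacle: this is elementary convex geometry, and the only delicate points are the two Cauchy--Schwarz equality arguments and the uniqueness of the supporting hyperplane of a ball at a boundary point. Conceptually the lemma says that a single farthest boundary point certifies the lower bound for $h$, a single nearest boundary point certifies the comparison for $\rho$, and the equalities $\max h=\max\rho$, $\min h=\min\rho$ furnish the dictionary translating between the support-function and radial-function descriptions of those two extremal points.
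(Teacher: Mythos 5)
Your proof is correct, and it is worth noting that the paper itself offers no argument for this lemma at all: it is quoted verbatim with a citation to \cite[Lemma 2.6]{CH}, so your elementary, self-contained derivation (identify $\max h=\max\rho$ with the circumradius and $\min h=\min\rho$ with the inradius about $o$; use the Cauchy--Schwarz equality case to show the farthest boundary point is $h(x_{\max})x_{\max}\in\Omega$; use tangency of the inball at $\rho(\xi_{\min})\xi_{\min}$ to force the supporting normal there to be $\xi_{\min}$) is a legitimate replacement for the citation, and all the delicate steps you flag (existence of the maximizer $y^{\ast}$, the containment $\bar B_{r}(o)\subseteq\Omega$, the equality case $r=r\,\xi_{\min}\cdot\nu$ forcing $\nu=\xi_{\min}$) are handled correctly. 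One thing you should state explicitly rather than pass over: in the last step you prove $\rho(\xi)\,\xi\cdot\xi_{\min}\le\rho(\xi_{\min})$, which is the inequality in Chen--Li's original lemma, whereas the statement as printed here has the sign reversed; the printed version with $\ge$ is simply false (take any $\xi$ with $\xi\cdot\xi_{\min}\le 0$, e.g.\ the thin box $[-1,1]\times[-\varepsilon,\varepsilon]$ with $\xi_{\min}=e_{2}$ and $\xi=e_{1}$), and your own argument --- every point $\rho(\xi)\xi\in\Omega$ lies in the supporting halfspace $\{y:\,y\cdot\xi_{\min}\le h(\xi_{\min})=\rho(\xi_{\min})\}$ --- shows exactly why. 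So your proof establishes the correct form of the third display (the one actually needed, and the one in \cite{CH}); flag the sign discrepancy in the quoted statement instead of silently proving the reversed inequality, since otherwise a reader comparing your conclusion with the displayed claim would think you proved the wrong thing.
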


\begin{remark}
The results in Lemma \ref{lem41} be true for any $t\geq 0$, for example, we can write 
\begin{align*}h(x,t)\geq& x\cdot x^t_{\max}h(x_{\max},t),\quad \forall x\in S^{n-1}.\end{align*}
\end{remark}

\begin{lemma}\label{lem42}
Assume $p\geq 0$, $\partial\Omega_t$ be a smooth solution to the flow (\ref{eq301}) in $\mathbb{R}^n$ and $f$ is a positive smooth even function on $S^{n-1}$. Then, there is a positive constant $C$ independents of
$t$ such that
\begin{align}\label{eq401}
\frac{1}{C}\leq h(x,t)\leq C, \ \ \forall(x,t)\in S^{n-1}\times[0,T),
\end{align}
\begin{align}\label{eq402}
\frac{1}{C}\leq \rho(\xi,t)\leq C, \ \ \forall(\xi,t)\in S^{n-1}\times[0,T).
\end{align}
Here, $h(x,t)$ and $\rho(\xi,t)$ are the support function and radial function of $\Omega_t$, respectively.
\end{lemma}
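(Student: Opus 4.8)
The plan is to combine the two monotone functionals from Section~\ref{sec3} --- the invariance of $I_{\gamma,q}(\Omega_t)$ (Lemma~\ref{lem31}) and the monotonicity of $\Phi(\Omega_t)$ (Lemma~\ref{lem32}) --- with the geometric comparison inequalities of Lemma~\ref{lem41} and the fact that $\Omega_t$ is origin-symmetric. First I would record the basic consequences of even symmetry: since $h(\cdot,t)$ is even, the points $x_{\max}$ and $\xi_{\min}$ of Lemma~\ref{lem41} together with the listed inequalities give $h(x,t)\ge |x\cdot x_{\max}|\,\max_{S^{n-1}}h(\cdot,t)$ and, integrating over $S^{n-1}$, a bound of the form $\int_{S^{n-1}}h(x,t)\,dx \ge c_n\,\max_{S^{n-1}}h(\cdot,t)$ for a dimensional constant $c_n>0$; likewise $\min_{S^{n-1}}h=\min_{S^{n-1}}\rho$ and $\max_{S^{n-1}}h=\max_{S^{n-1}}\rho$, so it suffices to bound $\max h$ from above and $\min h$ from below.

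For the \emph{upper bound}, let $R(t)=\max_{S^{n-1}}h(\cdot,t)$. Since $q>2$, one has $n-q+1<n-1$, so the kernel $|z-y|^{-(n-q+1)}$ is locally integrable and the integral defining $\widetilde V_{\gamma,q}(\Omega_t,y)$ in \eqref{eq104} is finite; moreover, because $\Omega_t$ contains a fixed small ball around the origin once a lower bound on $\min h$ is in hand (bootstrapped below), and because the Gaussian weight $e^{-|z|^2/2}$ is bounded below on $\Omega_t$ when $\Omega_t$ lies in a fixed large ball, I expect a two-sided bound $c_1\le \widetilde V_{\gamma,q}(\Omega_t,\cdot)\le c_2$ on $\partial\Omega_t$ \emph{as long as} $\Omega_t$ stays in a fixed annular region $B_{1/C}\subseteq\Omega_t\subseteq B_C$. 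Feeding this into \eqref{eq305}, $I_{\gamma,q}(\Omega_t)=\tfrac12\int_{\Omega_t}\widetilde V_{\gamma,q}(\Omega_t,y)\,dy \asymp |\Omega_t|$, and since $I_{\gamma,q}$ is conserved along the flow and $|\Omega_t|\ge \omega_n (\min h)^n$ is bounded below while $|\Omega_t|\le \omega_n R(t)^n$, conservation of $I_{\gamma,q}$ alone does not immediately cap $R(t)$; instead I would use $\Phi$. If $p>0$, monotonicity of $\Phi$ gives $\tfrac1p\int_{S^{n-1}}f h^p\,dx\le \tfrac1p\int_{S^{n-1}} f h_0^p\,dx=:C_0$, and with $f\ge \min f>0$ this yields $\int_{S^{n-1}}h^p\,dx\le C$, whence by the symmetry inequality above $R(t)^p\le C$. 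If $p=0$, the same argument applied to $\int_{S^{n-1}} f\log h\,dx$ gives an upper bound on $\int_{S^{n-1}}\log h\,dx$, and combined with the pointwise lower bound $h(x,t)\ge |x\cdot x_{\max}|R(t)$ (which makes $\log h$ bounded below away from $R(t)$) forces $\log R(t)\le C$. Either way $R(t)=\max h=\max\rho\le C$.

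For the \emph{lower bound}, let $r(t)=\min_{S^{n-1}}h(\cdot,t)=\min_{S^{n-1}}\rho(\cdot,t)$, attained at $\xi_{\min}$. By Lemma~\ref{lem41}, $\rho(\xi,t)\,\xi\cdot\xi_{\min}\ge r(t)$, so $\Omega_t$ contains the "spindle'' $\{\,c\,\xi : 0\le c\le r(t)/(\xi\cdot\xi_{\min}),\ \xi\cdot\xi_{\min}>0\,\}$ and its reflection, a body of volume $\ge c_n' r(t)$ (linear in $r(t)$, as is standard for such slabs). Combining with the upper bound $\Omega_t\subseteq B_C$, the Gaussian chord integral is comparable to volume as above, but now we need a lower bound on $I_{\gamma,q}$, which again follows from its conservation and its positivity at $t=0$: $I_{\gamma,q}(\Omega_t)=I_{\gamma,q}(\Omega_0)>0$. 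On the other hand, with the kernel bounded (from above, since $\Omega_t\subseteq B_C$ keeps $|z-y|$ from being forced large but can be small --- here one uses $n-q+1<n-1$ so that $\int_{\Omega_t}|z-y|^{-(n-q+1)}dz$ is bounded by a dimensional constant times $(\mathrm{diam}\,\Omega_t)^{q-1}\le C$) one gets $\widetilde V_{\gamma,q}(\Omega_t,y)\le C$, hence $I_{\gamma,q}(\Omega_t)\le C|\Omega_t|\le C\,c_n' r(t)\cdot(\text{const})$ --- wait, more carefully, $|\Omega_t|$ can be small only if $r(t)$ is small, so $0<I_{\gamma,q}(\Omega_0)=I_{\gamma,q}(\Omega_t)\le C|\Omega_t|$, giving $|\Omega_t|\ge c>0$; but $|\Omega_t|\le \omega_n R(t)^{n-1}r(t)\le C r(t)$ by the slab containment reversed (the body sits in $B_{R}$ and its width in the $\xi_{\min}$ direction is controlled by $r(t)$), forcing $r(t)\ge 1/C$. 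This closes the loop and makes the earlier "as long as $\Omega_t$ stays in a fixed annulus'' hypothesis self-improving via a standard continuity (open--closed) argument on $[0,T)$.

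Finally, the \emph{$C^1$ estimate} is immediate from the $C^0$ bounds: $|\nabla h(x,t)|^2 = |X(x,t)|^2 - h(x,t)^2 \le \rho(\xi,t)^2 \le C^2$ by \eqref{eq202} and \eqref{eq402}, so $|\nabla h|\le C$ uniformly on $S^{n-1}\times[0,T)$.

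The main obstacle I anticipate is the careful two-sided control of the nonlocal quantity $\widetilde V_{\gamma,q}(\Omega_t,\cdot)$: one must verify that the chord-type integral $\int_{\Omega_t} e^{-|z|^2/2}|z-y|^{-(n-q+1)}\,dz$ is bounded above (using $q>2$, equivalently $n-q+1<n-1$, to absorb the singularity --- this is exactly why the hypothesis is $q>2$ rather than $q>1$) and bounded below (using that $\Omega_t$ contains a ball of fixed radius once the lower bound on $\min h$ is bootstrapped, together with the Gaussian weight being bounded below on the bounded region $B_C$). Everything else is the now-standard interplay of Lemma~\ref{lem31}, Lemma~\ref{lem32}, Lemma~\ref{lem41} and origin-symmetry, organized through a continuity argument.
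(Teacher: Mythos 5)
Your argument is correct, and the upper bound is obtained exactly as in the paper: monotonicity of $\Phi(\Omega_t)$ (Lemma \ref{lem32}) combined with the pointwise inequality $h(x,t)\geq |x\cdot x_{\max}|\,h_{\max}(t)$ from Lemma \ref{lem41} and evenness, treating $p>0$ and $p=0$ separately. Where you genuinely diverge is the lower bound. The paper argues by contradiction: if $\min h(\cdot,t_k)\to 0$, Blaschke selection gives a degenerate origin-symmetric limit body contained in a lower-dimensional subspace, and bounded convergence applied to \eqref{eq305} forces $I_{\gamma,q}$ of the limit to vanish, contradicting the conservation $I_{\gamma,q}(\Omega_t)=I_{\gamma,q}(\Omega_0)>0$ from Lemma \ref{lem31}. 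You instead make this quantitative: since $\Omega_t\subseteq B_C$ (already proved), $\widetilde V_{\gamma,q}(\Omega_t,\cdot)\leq C$, so $0<I_{\gamma,q}(\Omega_0)=I_{\gamma,q}(\Omega_t)\leq C|\Omega_t|$, while origin-symmetry confines $\Omega_t$ to the slab $\{|y\cdot x_{\min}|\leq \min h\}$ intersected with $B_C$, so $|\Omega_t|\leq C\min h$, yielding an explicit lower bound on $\min h$. Both routes hinge on Lemma \ref{lem31}; yours gives an effective constant and avoids compactness, at the modest price of the upper bound on the chord integral, while the paper's avoids that estimate via bounded convergence.

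Two small corrections to your commentary, neither of which affects the proof. First, the continuity (open--closed) bootstrap and the two-sided bound on $\widetilde V_{\gamma,q}$ are unnecessary here: the lower bound on $h$ only uses the upper bound on $\widetilde V_{\gamma,q}$, which needs only $\Omega_t\subseteq B_C$, already secured by the $\Phi$-argument; the lower bound $\widetilde V_{\gamma,q}\geq c$ is only needed later (for $\theta(t)$, Lemma \ref{lem44}). Second, your explanation of the hypothesis $q>2$ is off: local integrability of the kernel $|z-y|^{-(n-q+1)}$ over the solid body requires only $n-q+1<n$, i.e.\ $q>1$ (indeed $\int_0^{2C}s^{q-2}\,ds<\infty$ for $q>1$), so the $C^0$ estimate holds for all $q>1$, consistent with the paper's Lemma \ref{lem42}; the restriction $q>2$ enters only in the curvature ($C^2$) estimates. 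Also note the $C^1$ bound you appended is the paper's separate Lemma \ref{lem43}, not part of the stated lemma, though your derivation of it is fine.
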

\begin{proof}

We only give proof of (\ref{eq401}), and (\ref{eq402}) can be obtained by the first conclusion of Lemma \ref{lem41} and (\ref{eq401}).

Firstly, we prove the upper bound of (\ref{eq401}). From monotonicity of $\Phi(\Omega_t)$ in Lemma \ref{lem32} and the second result of Lemma \ref{lem41}, there is following result for $p>0$,
\begin{align*}
\Phi(\Omega_0)\geq&\Phi(\Omega_t)=\frac{1}{p}\int_{S^{n-1}}f(x)h(x,t)^pdx\\
\geq&\frac{1}{p}\int_{S^{n-1}}f(x)[h(x_{\max},t)x\cdot x^t_{\max}]^pdx\\
\geq&\frac{1}{p}\int_{\{x\in S^{n-1}:x\cdot x^t_{\max}\geq \frac{1}{2}\}}f(x)[h(x_{\max},t)x\cdot x^t_{\max}]^pdx\\
\geq&\frac{1}{p}\int_{\{x\in S^{n-1}:x\cdot x^t_{\max}\geq \frac{1}{2}\}}f(x)[\frac{1}{2}h(x_{\max},t)]^pdx\\
=&\frac{1}{p}\frac{h(x_{\max},t)^p}{2^p}\int_{\{x\in S^{n-1}:x\cdot x^t_{\max}\geq \frac{1}{2}\}}f(x)dx\\
\geq&Ch(x_{\max},t)^p,
\end{align*}
then,
\begin{align*}
\sup h(x_{\max},t)\leq \bigg(\frac{\Phi(\Omega_0)}{C}\bigg)^{\frac{1}{p}},
\end{align*} 
 where, $h(x_{\max},t)=\max_{S^{n-1}}h(x,t)$ for any $t$. Here, $C$ is a positive constant independents from $t$.

Similarly, when $p=0$, we have
\begin{align*}
\Phi(\Omega_0)\geq &\Phi(\Omega_t)=\int_{S^{n-1}}f(x)\log h(x,t)dx\\
\geq &\int_{S^{n-1}}f(x)\log[h(x_{\max},t)x\cdot x^t_{\max}]dx\\
\geq&\log h(x_{\max},t)\int_{S^{n-1}}f(x)dx+\int_{\{x\in S^{n-1}: x\cdot x^t_{\max}\geq\frac{1}{2}\}}f(x)\log(x\cdot x^t_{\max})\\
\geq&C\log h(x_{\max},t)-c\int_{\{x\in S^{n-1}: x\cdot x^t_{\max}\geq\frac{1}{2}\}}f(x)dx\\
\geq&C\log  h(x_{\max},t)-c_1,
\end{align*}
then, 
\begin{align*}\sup h(x_{\max},t)\leq e^{\frac{\Phi(\Omega_0)+c_1}{C}}.\end{align*}
Here, $C$ and $c_1$ be positive constants independent on $t$.

To prove the lower bound of $h(x,t)$, we use the contradiction. Let us assume that $\{t_k\}\subset [0,T)$ be a sequence such that $h(x,t_k)$ is not uniformly bounded away from $0$, i.e., $\min_{S^{n-1}}h(x,t_k)\rightarrow 0$ as $k \rightarrow \infty$. On the other hand, making use of the upper bound, by Blaschke-Selection theorem, there is a subsequence in $\{\Omega_{t_k}\}$, for convenience, which is still denoted by $\{\Omega_{t_k}\}$, such that $\{\Omega_{t_k}\}\rightarrow \widetilde{\Omega}$ as $k\rightarrow\infty$, where $\widetilde{\Omega}$ is a origin-symmetric convex body. Then, we obtain $\min_{S^{n-1}}h(\widetilde{\Omega},\cdot)=\lim_{k\rightarrow \infty}\min_{S^{n-1}}h(\Omega_{t_k},\cdot)=0$. This implies that $\widetilde{\Omega}$ is contained in a lower-dimensional subspace in $\mathbb{R}^n$. This can lead to $\rho(\xi,t_k)\rightarrow 0$ as $k\rightarrow\infty$ almost everywhere with respect to the spherical Lebesgue measure. According to bounded convergence theorem and formula (\ref{eq305}), we can derive
\begin{align*}I_{\gamma,q}(\widetilde{\Omega})=&\frac{1}{2}\int_{\widetilde{\Omega}}\widetilde{V}_{\gamma,q}(\widetilde{\Omega},y)dy\\
=&\lim_{k\rightarrow\infty}\frac{1}{2}\int_{S^{n-1}}\int_0^{\rho(\xi,t_k)}\widetilde{V}_{\gamma,q}(\widetilde{\Omega},\rho(\xi,t_k))\rho(\xi,t_k)^{n-1}d\rho d\xi \rightarrow 0.
\end{align*}
However, Lemma \ref{lem31} shows that  
\begin{align*}
I_{\gamma,q}(\widetilde{\Omega})= I_{\gamma,q}(\Omega_0)=c~~\text{(positive constant)}\neq 0,
\end{align*}
which is a contradiction. It
follows that $h(x, t)$ has a uniform lower bound. Therefore, we complete estimate of Lemma \ref{lem42}.
\end{proof}

\begin{lemma}\label{lem43}Let $p\geq 0$, $\partial\Omega_t$ be a smooth solution to the flow (\ref{eq301}) in $\mathbb{R}^n$ and $f$ is a positive smooth even function on $S^{n-1}$. Then, there is a positive constant $C$ independents of $t$ such that
\begin{align}\label{eq403}|\nabla h(x,t)|\leq C,\quad\forall(x,t)\in S^{n-1}\times [0,T),\end{align}
and
\begin{align}\label{eq404}|\nabla \rho(\xi,t)|\leq C,\quad \forall(\xi,t)\in S^{n-1}\times [0,T).\end{align}
\end{lemma}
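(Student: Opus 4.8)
The plan is to deduce both gradient bounds directly from the $C^0$ estimates of Lemma \ref{lem42}, using only the elementary relations among the support function, the radial function, and the boundary position; no maximum principle or flow equation is needed here. For \eqref{eq403}, recall that if $X(\cdot,t)=g_{\Omega_t}^{-1}(\cdot)$ denotes the inverse Gauss parametrization of $\partial\Omega_t$, then differentiating \eqref{eq202} gives $X(x,t)=\nabla h(x,t)+h(x,t)\,x$, and since $\nabla h(x,t)$ is tangent to $S^{n-1}$ at $x$ (hence orthogonal to $x$),
\begin{align*}
|\nabla h(x,t)|^2 = |X(x,t)|^2 - h(x,t)^2 .
\end{align*}
Now $X(x,t)$ is a boundary point of the origin-symmetric body $\Omega_t$, so $|X(x,t)|=\rho(\xi,t)$ where $\xi=X(x,t)/|X(x,t)|\in S^{n-1}$ is its radial direction. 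Hence $|\nabla h(x,t)|^2\le\big(\max_{S^{n-1}}\rho(\cdot,t)\big)^2\le C^2$ by \eqref{eq402}, which is \eqref{eq403}.

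For \eqref{eq404} I would use the dual identity written in radial form. Let $y=\rho(\xi,t)\,\xi\in\partial\Omega_t$ and let $x\in S^{n-1}$ be the outer unit normal of $\partial\Omega_t$ at $y$. The standard formula for the unit normal of a radial graph over $S^{n-1}$ gives
\begin{align*}
x = \frac{\rho(\xi,t)\,\xi - \nabla\rho(\xi,t)}{\sqrt{\rho(\xi,t)^2 + |\nabla\rho(\xi,t)|^2}},
\end{align*}
with $\nabla\rho(\xi,t)$ tangent to $S^{n-1}$ at $\xi$. Pairing this with $y$ and using $h(x,t)=\langle y,x\rangle$ together with $\langle\xi,\nabla\rho(\xi,t)\rangle=0$ yields
\begin{align*}
h(x,t) = \frac{\rho(\xi,t)^2}{\sqrt{\rho(\xi,t)^2 + |\nabla\rho(\xi,t)|^2}},
\end{align*}
and therefore
\begin{align*}
|\nabla\rho(\xi,t)|^2 = \rho(\xi,t)^2\left(\frac{\rho(\xi,t)^2}{h(x,t)^2} - 1\right).
\end{align*}
Since $h(\cdot,t)$ and $\rho(\cdot,t)$ are bounded above and below by uniform constants by Lemma \ref{lem42}, the right-hand side is bounded by a constant depending only on $C$, giving \eqref{eq404}. (Alternatively one could derive \eqref{eq404} from \eqref{eq403} via the logarithmic relation \eqref{eq304} between $h$ and $\rho$, but the computation above is self-contained.)

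I do not expect a real obstacle in this lemma: the $C^1$ bound is simply a consequence of $\Omega_t$ being trapped between the fixed balls $B_{1/C}$ and $B_C$ for all $t\in[0,T)$, which forces both $h(\cdot,t)$ and $\rho(\cdot,t)$ to be uniformly Lipschitz on $S^{n-1}$. The only point needing a little care is the bookkeeping of corresponding directions — the bound on $|\nabla h|$ at a normal direction $x$ is controlled by $\rho$ at the radial direction $\xi$ of the boundary point whose outer normal is $x$, and symmetrically for $|\nabla\rho|$ — but since the estimates of Lemma \ref{lem42} are uniform over the whole sphere, this causes no difficulty. The genuinely delicate estimate is the $C^2$ bound, which is treated afterward.
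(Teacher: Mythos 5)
Your argument is correct and is essentially the paper's proof: the paper likewise deduces the gradient bounds immediately from the $C^0$ estimates of Lemma \ref{lem42} via the identities $\rho^2=h^2+|\nabla h|^2$ and $h=\rho^2/\sqrt{\rho^2+|\nabla\rho|^2}$ (cited from \cite{LR}), which you simply rederive from $X=\nabla h+hx$ and the radial-graph normal formula. No substantive difference.
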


\begin{proof}
The desired results immediately follows from Lemma \ref{lem42} and the identities (see e.g. \cite{LR}) as follows
\begin{align*}
h=\frac{\rho^2}{\sqrt{\rho^2+|\nabla\rho|^2}},\qquad\rho^2=h^2+|\nabla h|^2.\end{align*}
\end{proof}

\begin{lemma}\label{lem44}Suppose $p\geq 0$, $\partial\Omega_t$ be a smooth solution to the flow (\ref{eq301}) in $\mathbb{R}^n$ and $f$ is a positive smooth even function on $S^{n-1}$. There always exists a positive constant $C$ independents of $t$, such that
\begin{align*}\frac{1}{C}\leq\theta(t)\leq C,\quad t\in [0,T).\end{align*}
\end{lemma}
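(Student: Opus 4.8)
The plan is to bound the numerator and the denominator of $\theta(t)$ in \eqref{eq302} separately, using only the $C^0$ bounds of Lemma \ref{lem42}. After enlarging the constant $C$ there we may assume $C\ge 1$. Since $1/C\le\rho(\cdot,t)\le C$ and $\Omega_t$ is star-shaped with respect to the origin, we obtain the uniform inclusions $B_{1/C}(0)\subseteq\Omega_t\subseteq B_C(0)$ for every $t\in[0,T)$; in particular every $y\in\Omega_t$ satisfies $|y|\le C$, so that $e^{-C^2/2}\le e^{-|y|^2/2}\le 1$.

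The denominator is handled at once: $f$ is continuous and positive on the compact sphere, hence $0<f_{\min}\le f\le f_{\max}<\infty$, and $1/C\le h(\cdot,t)\le C$ together with $p\ge 0$ gives $C^{-p}\le h^p(\cdot,t)\le C^p$; therefore $\int_{S^{n-1}}h^p(x,t)f(x)\,dx$ is pinched between two positive constants depending only on $n,p,C$ and $f$ (it is simply $\int_{S^{n-1}}f$ when $p=0$).

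The numerator requires a uniform two-sided bound $0<a\le\widetilde V_{\gamma,q}(\Omega_t,y)\le b<\infty$ valid for all $y\in\Omega_t$ and all $t$, since then $1/C\le\rho(\cdot,t)\le C$ immediately pinches $\int_{S^{n-1}}\widetilde V_{\gamma,q}(\Omega_t,\cdot)\rho^n(\xi,t)\,d\xi$ between positive constants. For the upper bound, $z,y\in B_C(0)$ forces $\Omega_t\subseteq B_{2C}(y)$, so after translation
\[
\int_{\Omega_t}\frac{e^{-|z|^2/2}}{|z-y|^{n-q+1}}\,dz\le\int_{B_{2C}(0)}\frac{dw}{|w|^{n-q+1}}<\infty,
\]
where finiteness is exactly the condition $n-q+1<n$, i.e.\ $q>1$, which holds since $q>2$. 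For the lower bound we restrict the integral to the annular set $A:=\{z\in B_{1/C}(0):|z-y|\ge\tfrac{1}{4C}\}$, whose measure is at least $\omega_nC^{-n}-\omega_n(4C)^{-n}>0$; on $A$ one has $\tfrac{1}{4C}\le|z-y|\le 2C$, hence $|z-y|^{-(n-q+1)}$ is bounded below by a positive constant independent of the sign of $n-q+1$, and the whole integral is therefore $\ge e^{-C^2/2}$ times that constant times $\mathcal H^n(A)$. Multiplying by $2e^{-|y|^2/2}\in[2e^{-C^2/2},2]$ yields the claimed bounds for $\widetilde V_{\gamma,q}(\Omega_t,y)$.

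Combining the two estimates gives $1/C'\le\theta(t)\le C'$ for some $C'$ independent of $t$, as desired. I expect the only delicate point to be making the lower bound on $\widetilde V_{\gamma,q}(\Omega_t,y)$ uniform in $y$ — in particular when $y$ lies near the origin, where $|z-y|$ can be arbitrarily small — and this is exactly what the annular region $A$ is designed to circumvent; everything else follows directly from the uniform inradius and circumradius provided by Lemma \ref{lem42}.
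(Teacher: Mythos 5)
Your proposal is correct and follows the same overall strategy as the paper: reduce the bound on $\theta(t)$ in \eqref{eq302} to the $C^0$ estimates of Lemma \ref{lem42} together with a uniform two-sided bound on $\widetilde V_{\gamma,q}(\Omega_t,y)$, exactly the bound \eqref{eq406} in the paper. The only difference is how that inner bound is verified: the paper rewrites $\widetilde V_{\gamma,q}$ in polar coordinates centered at the boundary point $y$ (formula \eqref{eq405}), so the kernel becomes $s^{q-2}$, whose integral over $[0,\rho_{\Omega_t,z}(u)]$ is pinched by the $C^0$ bounds since $q>1$; you instead work directly in Cartesian coordinates, getting the upper bound by comparison with $\int_{B_{2C}(0)}|w|^{-(n-q+1)}dw$ (finite precisely for $q>1$) and the lower bound by restricting to the annular set $A$ that stays away from the singularity at $y$. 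Your route is slightly more hands-on but actually fills in a step the paper leaves terse ("from the $C^0$ estimate, it suffices to have \eqref{eq406}"), and your annulus argument makes the uniformity of the lower bound in $y$ explicit, which is the one point where the paper's one-line justification asks the reader to trust the polar-coordinate formula.
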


\begin{proof}By the definition of $\theta(t)$,
\begin{align*}\theta(t)=\frac{\int_{S^{n-1}}\widetilde{V}_{\gamma,q}(\Omega_t,\cdot)\rho^n(\xi,t)d\xi}{\int_{S^{n-1}}h^p(x,t)f(x)dx}.\end{align*}
Let $\Omega_t\in\mathcal{K}^n_o$, $z\in\Omega_t$, $y\in\partial\Omega_t$, $u\in S^{n-1}$, $y-z=su$, then, $dz=-s^{n-1}dsdu$, $y=\rho_{\Omega_t}(u)u$, $z=\rho_{\Omega_t}(u)u-su$ and $s\in[0, \rho_{\Omega_t,z}(u)]$. Thus,
\begin{align}\label{eq405}
\nonumber\widetilde{V}_{\gamma,q}(\Omega_t,y)=&2e^{-|y|^2/2}\int_{\Omega_t}\frac{e^{-|z|^2/2}}{|z-y|^{n-q+1}}dz\\
\nonumber=&2e^{-|\rho_{\Omega_t}(u)u|^2/2}\int_{S^{n-1}}\int_{0}^{\rho_{\Omega_t,z}(u)}\frac{e^{-|\rho_{\Omega_t}(u)u-su|^2/2}s^{n-1}}{s^{n-q+1}}dsdu\\
=&2e^{-|\rho_{\Omega_t}(u)u|^2/2}\int_{S^{n-1}}\int_{0}^{\rho_{\Omega_t,z}(u)}e^{-|\rho_{\Omega_t}(u)u-su|^2/2}s^{q-2}dsdu,
\end{align}
from the $C^0$ estimate, it suffices to have
\begin{align}\label{eq406}
\frac{1}{C}\leq\widetilde{V}_{\gamma,q}(\Omega_t,y)\leq C.
\end{align}
Therefore, the upper and lower bound of $\theta(t)$ can be directly obtained from the Lemma \ref{lem42} and (\ref{eq406}). \end{proof}

\subsection{$C^2$ estimate}

In this subsection, we establish the upper and lower bounds of principal curvature. This will shows
that Eq. (\ref{eq303}) is uniformly parabolic. The technique used in this proof was first introduced by Tso \cite{TK} to derive the upper bound of the Gauss curvature.

By Lemma \ref{lem42} and Lemma \ref{lem43}, if $h$ is a smooth even solution of Eq. (\ref{eq303}) on $S^{n-1}\times [0,T)$
and $f$ is positive smooth even function on $S^{n-1}$, then along the flow (\ref{eq301}) for $[0,T), \nabla h+hx$, and $h$ are smooth functions whose
ranges are within some bounded domain $\Omega_{[0,T)}$ and bounded interval $I_{[0,T)}$, respectively. Here $\Omega_{[0,T)}$ and $I_{[0,T)}$ depend only
on the upper and lower bounds of $h$ on $[0,T)$.

\begin{lemma}\label{lem46} For $q>2$ and $p\geq 0$, assume $\partial\Omega_t$ be a smooth solution to the flow (\ref{eq301}) in $\mathbb{R}^n$ and $f$ is a positive smooth even function on $S^{n-1}$. There is a positive constant $C$ depending on
$\|f\|_{C^0(S^{n-1})}, \|f\|_{C^1(S^{n-1})} ,\|f\|_{C^2(S^{n-1})}$, $\|h\|_{C^0(S^{n-1}\times [0,T)}$, $\|h\|_{C^1(S^{n-1}\times [0,T)}$ and $\|\theta\|_{C^0(S^{n-1}\times [0,T)}$, such that the principal curvatures $\kappa_i$ of $\Omega_t$, $i=1,\cdots, n-1$, are
bounded from above and below, satisfying
\begin{align*}\frac{1}{C}\leq \kappa_i(x,t)\leq C, \quad\forall (x,t)\in S^{n-1}\times [0,T).\end{align*}
\end{lemma}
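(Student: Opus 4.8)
The plan is to follow the Tso-type technique in two stages: first bound the Gauss curvature $\mathcal{K} = (\det(h_{ij}+h\delta_{ij}))^{-1}$ from above (equivalently, bound the principal radii from below), and then bound $\mathcal{K}$ from below (equivalently, bound each principal curvature $\kappa_i$ from above), finally combining the two to squeeze each $\kappa_i$ between two positive constants. For the \emph{upper bound of $\mathcal{K}$}, I would introduce the auxiliary function
\begin{align*}
Q(x,t)=\frac{-\partial_t h + h - c_0}{h - \varepsilon_0},
\end{align*}
where $c_0,\varepsilon_0$ are small positive constants chosen using the $C^0$ bound of Lemma \ref{lem42} (so that $h - \varepsilon_0$ is bounded below by a positive constant and the convex body $\Omega_t$ contains a small ball of radius $\varepsilon_0$ after a shift, which is available by origin-symmetry). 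Rewriting the flow equation \eqref{eq303} as $\partial_t h = -\theta(t) h^p \mathcal{K} f / \widetilde V_{\gamma,q}([h],\cdot) + h$, one sees that $-\partial_t h + h = \theta(t) h^p \mathcal{K} f / \widetilde V_{\gamma,q}$, so $Q$ is, up to bounded positive factors, comparable to $\mathcal{K}$; thus an upper bound on $\max_{S^{n-1}} Q(\cdot,t)$ uniform in $t$ gives the desired bound. Then I would compute the evolution equation $\partial_t Q$, apply the maximum principle at an interior spatial maximum point of $Q$, use the concavity/structure of $\log\det$ together with the bounds already obtained on $h$, $|\nabla h|$, $\theta$, $\widetilde V_{\gamma,q}$ (the latter from \eqref{eq406}) and their derivatives, to derive a differential inequality of the form $\partial_t (\max Q) \leq C_1 - C_2 (\max Q)^{1+\delta}$ for some $\delta>0$ and positive constants; this forces $\max Q$ to stay bounded on $[0,T)$.

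The second stage bounds the largest principal curvature $\kappa_{\max}$ from above. Here I would use an auxiliary quantity such as
\begin{align*}
W(x,t)=\log \lambda_{\max}(b_{ij}) - A\,\log h + B\,|\nabla h|^2,
\end{align*}
where $\lambda_{\max}$ denotes the largest eigenvalue of the curvature matrix $b_{ij}=h_{ij}+h\delta_{ij}$ (or one may work with $\log(\sum e^{N b_{ij}x^ix^j})$ to avoid the non-smoothness of $\lambda_{\max}$), and $A,B$ are large constants to be fixed. Differentiating $W$ along the flow, using the already-established upper bound for $\mathcal{K}$ (which lower-bounds $\det b_{ij}$, hence bounds all the smaller eigenvalues from below once the largest is controlled — or rather, gives a lower bound on the product), and commuting covariant derivatives on $S^{n-1}$ (which produces curvature terms absorbed by the $\delta_{ij}$ in $b_{ij}$), I would again apply the maximum principle. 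The good negative term should come from the second-order derivatives of $\log\det$ in the direction of the largest eigenvalue, while the bad terms involving $\nabla f$, $\nabla \widetilde V_{\gamma,q}$, $\nabla^2 \widetilde V_{\gamma,q}$, and the lower-order terms of the flow are controlled by choosing $A,B$ large and invoking the $C^0$, $C^1$ bounds on $h$ and the regularity of $f$ and of $\widetilde V_{\gamma,q}(\Omega_t,\cdot)$ as a function on $\partial\Omega_t$ (smooth in $y$ with bounds depending only on the $C^0$ bound of $\Omega_t$, since the integrand in \eqref{eq104} is smooth and the domain $\Omega_t$ has uniformly bounded geometry by Lemma \ref{lem42}). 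Once $\kappa_{\max}\leq C$ on $[0,T)$, the lower bound $\kappa_i \geq 1/C$ follows: from $\det b_{ij} = \mathcal{K}^{-1} \geq 1/C$ (the lower bound on the principal radii from stage one) together with $\lambda_{\max}(b_{ij}) \leq C$, every eigenvalue of $b_{ij}$ is bounded above, so every $\kappa_i = \lambda_i(b_{ij})^{-1}$ is bounded below.

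The main obstacle I anticipate is the handling of the nonlocal factor $\widetilde V_{\gamma,q}(\Omega_t,\cdot)$ and its spatial derivatives in both maximum-principle arguments: unlike in the classical Gauss curvature flow, $\widetilde V_{\gamma,q}$ depends on the whole body $\Omega_t$, so one must verify that, as a function pulled back to $S^{n-1}$ via the inverse Gauss map, it and its first two covariant derivatives are bounded by constants depending only on the $C^0$ bound of $h$ (and $q>2$, which ensures the integral $\int_0^{\rho}e^{-|\cdots|^2/2}s^{q-2}\,ds$ in \eqref{eq405} is convergent and the integrand is a smooth, uniformly controlled function of the base point $y$ with no singularity at $s=0$). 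This is where the hypothesis $q>2$ is essential — it makes $\widetilde V_{\gamma,q}(\Omega_t,\cdot)$ a genuinely smooth positive function on $\partial\Omega_t$ with derivative bounds uniform along the flow, so that it behaves like a benign coefficient rather than a source of extra difficulty. Once this "coefficient regularity" lemma is in place, the two maximum-principle computations are of the standard Tso type and go through with the estimates of Lemmas \ref{lem42}–\ref{lem44} in hand.
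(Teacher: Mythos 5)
Your proposal is correct and follows essentially the same two-stage Tso-style argument as the paper: an upper bound on $\mathcal{K}$ via an auxiliary function comparable to $-h_t/(h-\varepsilon_0)$, then an upper bound on the largest principal radius via $\log\lambda_{\max}(b_{ij})-A\log h+B|\nabla h|^2$, combined to squeeze each $\kappa_i$. Your identification of the role of $q>2$ in controlling $\widetilde V_{\gamma,q}(\Omega_t,\cdot)$ and its first and second covariant derivatives is also exactly where that hypothesis enters the paper's estimates.
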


\begin{proof} The proof is divided into two parts: in the first part, we derive an upper bound for the Gauss curvature $\mathcal{K}(x,t)$; in the second part, we give an estimate of bound above for the principal radii $b_{ij}=h_{ij}+h\delta_{ij}$.

Step 1: Prove $\mathcal{K}\leq C$.

Firstly, we construct the following auxiliary function,
\begin{align*}W(x,t)=\frac{\theta(t)\widetilde{V}_{\gamma,q}^{-1}h^pf\mathcal{K}-h}{h-\varepsilon_0}
\equiv\frac{-h_t}{h-\varepsilon_0},\end{align*}
where
\begin{align*}\varepsilon_0=\frac{1}{2}\min_{S^{n-1}\times[0, T)}h(x,t)>0,\quad h_t =\frac{\partial h}{\partial t}.\end{align*}

For any fixed $t\in[0,T)$, we assume that $W(x_0, t)=\max_{S^{n-1}}W(x,t)$ is the spatial maximum of $W$. Then at $(x_0,t)$, we have
\begin{align}\label{eq407}
0=\nabla_iW=\frac{-h_{ti}}{h-\varepsilon_0}+\frac{h_th_i}{(h-\varepsilon_0)^2},
\end{align}
and from (\ref{eq407}), at $(x_0,t)$, we also get
\begin{align}\label{eq408}
\nonumber 0\geq\nabla_{ii}W=&\frac{-h_{tii}}{h-\varepsilon_0}+\frac{h_{ti}h_{i}}{(h-\varepsilon_0)^2}
+\frac{h_{ti}h_{i}+h_th_{ii}}{(h-\varepsilon_0)^2}-\frac{h_th_i(2(h-\varepsilon_0)h_i)}{(h-\varepsilon_0)^4}\\
\nonumber=&\frac{-h_{tii}}{h-\varepsilon_0}+\frac{2h_{ti}h_{i}+h_th_{ii}}
{(h-\varepsilon_0)^2}
-\frac{2h_th_ih_i}{(h-\varepsilon_0)^3}\\
\nonumber=&\frac{-h_{tii}}{h-\varepsilon_0}+\frac{2h_{ti}h_{i}+h_th_{ii}}
{(h-\varepsilon_0)^2}
+\frac{2h_{ti}h_i}{(h-\varepsilon_0)^2}\\
=&\frac{-h_{tii}}{h-\varepsilon_0}+\frac{h_th_{ii}}
{(h-\varepsilon_0)^2}.
\end{align}
From (\ref{eq408}), we obtain
$$-h_{tii}\leq\frac{-h_th_{ii}}{h-\varepsilon_0},$$
hence,
\begin{align}\label{eq409}\nonumber -h_{tii}-h_t\delta_{ii}\leq&\frac{-h_th_{ii}}{h-\varepsilon_0}-h_t\delta_{ii}=\frac{-h_t}
{h-\varepsilon_0}(h_{ii}+(h-\varepsilon_0)\delta_{ii})\\
=&W(h_{ii}+h\delta_{ii}-\epsilon_0\delta_{ii})
=W(b_{ii}-\varepsilon_0\delta_{ii}).\end{align}
At $(x_0,t)$, we also have
\begin{align}\label{eq410}\frac{\partial}{\partial t}W=&\frac{-h_{tt}}{h-\epsilon_0}+\frac{h_t^2}{(h-\epsilon_0)^2}\\
\nonumber=&\frac{f}{(h-\epsilon_0)}\bigg[\frac{\partial(\theta(t)\widetilde{V}_{\gamma,q}^{-1}h^p)}{\partial t}\mathcal{K}+\theta(t)\widetilde{V}_{\gamma,q}^{-1}h^p\frac{\partial(\det(\nabla^2h+hI))^{-1}}{\partial t}\bigg]+W+W^2\\
\nonumber=&\frac{f}{(h-\epsilon_0)}\bigg[\bigg(\frac{\partial \theta(t)}{\partial t}\widetilde{V}_{\gamma,q}^{-1}h^p+\theta(t)\frac{\partial (\widetilde{V}_{\gamma,q}^{-1}h^p)}{\partial t}\bigg)\mathcal{K}+\theta(t)\widetilde{V}_{\gamma,q}^{-1}h^p\frac{\partial(\det(\nabla^2h+hI))^{-1}}{\partial t}\bigg]\\
\nonumber &+W+W^2.
\end{align}
Here, we firstly compute $\frac{\partial \widetilde{V}_{\gamma,q}}{\partial t}$. Recall (\ref{eq405}), we know that for $u\in S^{n-1}$,
\begin{align*}
\widetilde{V}_{\gamma,q}(\Omega_t,y)=2e^{-|\rho_{\Omega_t}(u)u|^2/2}\int_{S^{n-1}}\int_{0}^{\rho_{\Omega_t,z}(u)}e^{-|\rho_{\Omega_t}(u)u-su|^2/2}s^{q-2}dsdu.
\end{align*}
Then,
\begin{align}\label{eq411}
\nonumber\frac{\partial \widetilde{V}_{\gamma,q}}{\partial t}=&-2e^{-|\rho_{\Omega_t}(u)u|^2/2}|\rho_{\Omega_t}(u)u|\frac{\partial\rho_{\Omega_t}(u)}{\partial t}u\int_{S^{n-1}}\int_{0}^{\rho_{\Omega_t,z}(u)}e^{-|\rho_{\Omega_t}(u)u-su|^2/2}s^{q-2}dsdu\\
&+2e^{-|\rho_{\Omega_t}(u)u|^2/2}\int_{S^{n-1}}\rho_{\Omega_t,z}(u)^{q-2}e^{-|\rho_{\Omega_t}(u)u-\rho_{\Omega_t,z}(u)u|^2/2}\frac{\partial\rho_{\Omega_t,z}(u)}{\partial t}du,
\end{align}
where $\rho_{\Omega_t,z}(u)=\frac{h(\Omega_t,x)-z\cdot x}{u\cdot x}$, $z=\nabla h$, at $x_0$, there is
\begin{align*}
\frac{\partial\rho_{\Omega_t,z}(u)}{\partial t}=&\frac{\frac{\partial h}{\partial t}-\frac{\partial z}{\partial t}\cdot x}{u\cdot x}=\frac{\frac{\partial h}{\partial t}-(\nabla h)_t \cdot x}{u\cdot x}\\
=&\frac{\frac{\partial h}{\partial t}}{u\cdot x}-\frac{x}{u\cdot x}\bigg(\sum_ih_{it}e_i+h_tx\bigg)\\
=&\frac{\frac{\partial h}{\partial t}}{u\cdot x}-\frac{x}{u\cdot x}\bigg(\sum_i\frac{h_th_i}{h-\varepsilon_0}e_i+h_tx\bigg).
\end{align*}
Therefore, from Lemma \ref{lem42} and (\ref{eq411}), we can obtain
\begin{align}\label{eq412}
\nonumber\frac{\partial \widetilde{V}_{\gamma,q}}{\partial t}\leq & 2e^{-|\rho_{\Omega_t}(u)u|^2/2}\int_{S^{n-1}}\rho_{\Omega_t,z}(u)^{q-2}e^{-|\rho_{\Omega_t}(u)u-\rho_{\Omega_t,z}(u)u|^2/2}\bigg[\frac{\frac{\partial h}{\partial t}}{u\cdot x}\\
\nonumber&-\frac{x}{u\cdot x}\bigg(\sum_i\frac{h_th_i}{h-\varepsilon_0}+h_tx\bigg)\bigg]du\\
\nonumber=&2e^{-|\rho_{\Omega_t}(u)u|^2/2}\int_{S^{n-1}}\rho_{\Omega_t,z}(u)^{q-2}e^{-|\rho_{\Omega_t}(u)u-\rho_{\Omega_t,z}(u)u|^2/2}\bigg[\frac{-W(h-\epsilon_0)}{u\cdot x}\\
\nonumber&-\frac{x}{u\cdot x}\bigg(\sum_i\frac{-W(h-\epsilon_0)h_i}{h-\varepsilon_0}-W(h-\epsilon_0)x\bigg)\bigg]du\\
\nonumber\leq&2e^{-|\rho_{\Omega_t}(u)u|^2/2}\int_{S^{n-1}}\rho_{\Omega_t,z}(u)^{q-2}e^{-|\rho_{\Omega_t}(u)u-\rho_{\Omega_t,z}(u)u|^2/2}\frac{x}{u\cdot x}\bigg(\sum_i\frac{W(h-\epsilon_0)h_i}{h-\varepsilon_0}\\
\nonumber&+W(h-\epsilon_0)x\bigg)du\\
\leq &C_1W(x_0,t).
\end{align}
\begin{align*}
\nonumber\frac{\partial\theta(t)}{\partial t}=&\frac{\partial}{\partial t}\bigg(\frac{\int_{S^{n-1}}\widetilde{V}_{\gamma,q}\rho^nd\xi}{\int_{S^{n-1}}h^pf(x)dx}\bigg)\\
=&\frac{\int_{S^{n-1}}\frac{\partial \widetilde{V}_{\gamma,q}}{\partial t}\rho^nd\xi+n\int_{S^{n-1}}\widetilde{V}_{\gamma,q}\rho^{n-1}\frac{\partial \rho}{\partial t}d\xi}{\int_{S^{n-1}}h^pf(x)dx}-\frac{p\int_{S^{n-1}}\widetilde{V}_{\gamma,q}\rho^nd\xi\int_{S^{n-1}}h^{p-1}f(x)\frac{\partial h}{\partial t}dx}{\bigg(\int_{S^{n-1}}h^pf(x)dx\bigg)^2}.
\end{align*}
By $\rho^2=h^2+|\nabla h|^2$ and $(x_0,t)$ is a maximum point of $W$, we obtain
\begin{align}\label{eq413}
\frac{\partial\rho}{\partial t}=\rho^{-1}(hh_t+\sum h_kh_{kt})=\rho^{-1}W(\epsilon_0h-\rho^2)\leq\rho^{-1}W(x_0,t)(\epsilon_0h-\rho^2). 
\end{align}
From (\ref{eq401}), (\ref{eq402}), (\ref{eq406}), (\ref{eq412}) and (\ref{eq413}), we can get 
\begin{align}\label{eq414}
\frac{\partial\theta(t)}{\partial t}\leq C_2W(x_0,t).
\end{align}
For $p\geq 0$, (\ref{eq401}), (\ref{eq406}) and (\ref{eq412}) tell us that
\begin{align}\label{eq415}
\nonumber \frac{\partial(\widetilde{V}_{\gamma,q}^{-1}h^p)}{\partial t}=&-(\widetilde{V}_{\gamma,q})^{-2}\frac{\widetilde{V}_{\gamma,q}}{\partial t}h^p+(\widetilde{V}_{\gamma,q})^{-1}\frac{\partial (h^p)}{\partial t}\\
\nonumber =&-(\widetilde{V}_{\gamma,q})^{-2}\frac{\widetilde{V}_{\gamma,q}}{\partial t}h^p+(\widetilde{V}_{\gamma,q})^{-1}ph^{p-1}\frac{\partial h}{\partial t}\\
=&-(\widetilde{V}_{\gamma,q})^{-2}\frac{\widetilde{V}_{\gamma,q}}{\partial t}h^p-p(\widetilde{V}_{\gamma,q})^{-1}h^{p-1}W(h-\epsilon_0)<0.
\end{align}
We use (\ref{eq206}), (\ref{eq409}) and recall $b_{ij}=\nabla_{ij}h+h\delta_{ij}$ may give
\begin{align}\label{eq416}
\nonumber\frac{\partial (\det(\nabla^2h+hI))^{-1}}{\partial t}=&-(\det(\nabla^2h+hI))^{-2}
\frac{\partial(\det(\nabla^2h+hI))}{\partial b_{ij}}\frac{\partial(\nabla^2h+hI)}{\partial t}\\
\nonumber=&-(\det(\nabla^2h+hI))^{-2}\frac{\partial(\det(\nabla^2h+hI))}{\partial b_{ij}}
(h_{tij}+h_t\delta_{ij})\\
\nonumber\leq&(\det(\nabla^2h+hI))^{-2}\frac{\partial(\det(\nabla^2h+hI))}{\partial b_{ij}}
W(b_{ij}-\varepsilon_0\delta_{ij})\\
\leq&\mathcal{K}W((n-1)-\varepsilon_0(n-1)\mathcal{K}^{\frac{1}{n-1}}).
\end{align}
where the last inequality uses 
\begin{align*}
\sum_ib^{ii}\geq(n-1)\mathcal{K}^{\frac{1}{n-1}}.
\end{align*}
Therefore, from (\ref{eq406}), (\ref{eq414}), (\ref{eq415}) and (\ref{eq416}), we have following conclusion at $(x_0,t)$ in (\ref{eq410}),
\begin{align}\label{eq417}
\nonumber\frac{\partial}{\partial t} W\leq&\frac{f}{(h-\epsilon_0)}\bigg[\bigg(\frac{\partial \theta(t)}{\partial t}\widetilde{V}_{\gamma,q}^{-1}h^p\bigg)\mathcal{K}+\theta(t)\widetilde{V}_{\gamma,q}^{-1}h^p\frac{\partial(\det(\nabla^2h+hI))^{-1}}{\partial t}\bigg]+W+W^2\\
\leq&\frac{1}{h-\varepsilon_0}\bigg(C_3W^2+f\theta h^p\widetilde{V}_{\gamma,q}^{-1}\mathcal{K}W((n-1)-\varepsilon_0(n-1)\mathcal{K}^{\frac{1}{n-1}})\bigg)+W+W^2.
\end{align}
If $W(x,t)>> 1$($>>$: far greater than), according to construction of $W$ and the previous estimate, we easily obtain
\begin{align*}
\frac{1}{C_4}\mathcal{K}\leq W\leq C_4\mathcal{K},
\end{align*}
then, (\ref{eq417}) implies that
\begin{align*}
\frac{\partial}{\partial t}W\leq& \frac{1}{h-\varepsilon_0}\bigg(C_3W^2+f\theta h^p\widetilde{V}_{\gamma,q}^{-1}C_4W^2[(n-1)-\varepsilon_0(n-1)(C_4W) ^{\frac{1}{n-1}}]\bigg)+W+W^2\\
=&\frac{1}{h-\varepsilon_0}W^2\bigg(C_3+f\theta h^p\widetilde{V}_{\gamma,q}^{-1}C_4(n-1)-f\theta h^p\widetilde{V}_{\gamma,q}^{-1}C_4^{\frac{n}{n-1}}(n-1)\varepsilon_0W ^{\frac{1}{n-1}}+2\bigg)\\
=&\frac{f\theta h^p\widetilde{V}_{\gamma,q}^{-1}C_4^{\frac{n}{n-1}}(n-1)}{h-\varepsilon_0}W^2\bigg(\frac{C_3+f\theta h^p\widetilde{V}_{\gamma,q}^{-1}C_4(n-1)+2}{f\theta h^p\widetilde{V}_{\gamma,q}^{-1}C_4^{\frac{n}{n-1}}(n-1)}-\varepsilon_0W ^{\frac{1}{n-1}}\bigg)\\
\leq & C_5W^2(C_6-\varepsilon_0W^{\frac{1}{n-1}})<0,
\end{align*}
since $C_5$ and $C_6$ depend on $\|f\|_{C^0(S^{n-1})}$, $\|h\|_{C^0(S^{n-1}\times [0,T)}$, $\|h\|_{C^1(S^{n-1}\times [0,T)}$, $\|\theta\|_{C^0(S^{n-1}\times [0,T)}$. Consequently, we get
\begin{align*}
W(x_0,t)\leq C,
\end{align*}
and for any $(x,t)$,
\begin{align*}
\mathcal{K}(x,t)=\frac{(h-\varepsilon_0)W(x,t)+h}{\theta(t)(\widetilde{V}_{\gamma,q})^{-1}h^pf}\leq
\frac{(h-\varepsilon_0)W(x_0,t)+h}{\theta(t)(\widetilde{V}_{\gamma,q})^{-1}h^pf}\leq C.
\end{align*}

Step 2: Prove $\kappa_i\geq\frac{1}{C}$.

We consider the auxiliary function as follows
\begin{align*}
E (x,t)=\log\beta_{\max}(\{b_{ij}\})-A\log h+B|\nabla h|^2,
\end{align*}
where $A, B$ are positive constants which will be chosen later, and $\beta_{\max}(\{b_{ij}\})$ denotes the maximal eigenvalue of $\{b_{ij}\}$; for convenience, we write $\{b^{ij}\}$ for $\{b_{ij}\}^{-1}$.

For every fixed $t\in[0,T)$, suppose $\max_{S^{n-1}}E(x,t)$ is attained at point $x_0\in S^{n-1}$. By a rotation of coordinates, we may assume
\begin{align*}
\{b_{ij}(x_0,t)\} \text{ is diagonal,} \quad \text {and} \quad \beta_{\max}(\{b_{ij}\}(x_0,t))=b_{11}(x_0,t).
\end{align*}
Hence, in order to show $\kappa_i\geq\frac{1}{C}$, that is to prove $b_{11}\leq C.$ By means of the above assumption, we
transform $E(x,t)$ into the following form,
\begin{align*}
\widetilde{E}(x,t)=\log b_{11}-A\log h+B|\nabla h|^2.
\end{align*}
Utilizing again the above assumption, for any fixed $t \in [0,T)$, $\widetilde{E}(x,t)$ has a local
maximum at $x_0$, thus, we have at $x_0$,
\begin{align}\label{eq418}
0=\nabla_i\widetilde{E}=&b^{11}\nabla_ib_{11}-A\frac{h_i}{h}+2B\sum h_kh_{ki}\\
\nonumber=&b^{11}(h_{i11}+h_1\delta_{i1})-A\frac{h_i}{h}+2Bh_ih_{ii},
\end{align}
and
\begin{align*}
0\geq&\nabla_{ii}\widetilde{E}\\
=&\nabla_ib^{11}(h_{i11}+h_1\delta_{i1})+b^{11}
[\nabla_i(h_{i11}+h_1\delta_{i1})]-A\bigg(\frac{h_{ii}}{h}-\frac{h_i^2}{h^2}\bigg)
+2B(\sum h_kh_{kii}+h^2_{ii})\\
=&-(b_{11})^{-2}\nabla_ib_{11}(h_{i11}+h_1\delta_{i1})+b^{11}(\nabla_{ii}b_{11})-A\bigg(\frac{h_{ii}}{h}-\frac{h_i^2}{h^2}\bigg)
+2B(\sum h_kh_{kii}+h^2_{ii})\\
=&b^{11}\nabla_{ii}b_{11}-(b^{11})^2(\nabla_ib_{11})^2-A\bigg(\frac{h_{ii}}{h}-\frac{h_i^2}{h^2}\bigg)
+2B(\sum h_kh_{kii}+h^2_{ii}).	
\end{align*}
At $x_0$, we also have
\begin{align*}
\frac{\partial}{\partial t}\widetilde{E}=&\frac{1}{b_{11}}\frac{\partial b_{11}}{\partial t}-A\frac{h_t}{h}+2B\sum h_kh_{kt}\\ =&b^{11}\frac{\partial}{\partial t}(h_{11}+h\delta_{11})-A\frac{h_t}{h}+2B\sum h_kh_{kt}\\
=&b^{11}(h_{11t}+h_t)-A\frac{h_t}{h}+2B\sum h_kh_{kt}.
\end{align*}

From Eq. (\ref{eq303}) and (\ref{eq206}), we know that
\begin{align}\label{eq419}
\nonumber \log(h-h_t)=&\log\bigg(h+\theta h^pf\widetilde{V}_{\gamma,q}^{-1}\mathcal{K}-h\bigg)\\
\nonumber=&\log\mathcal{K}+
\log\bigg(\theta h^pf\widetilde{V}_{\gamma,q}^{-1}\bigg)\\
=&-\log[\det(\nabla^2h+hI)]+\log\bigg(\theta h^pf\widetilde{V}_{\gamma,q}^{-1}\bigg).
\end{align}
Let
\begin{align*}
\psi(x,t)=\log\bigg(\theta h^pf\widetilde{V}_{\gamma,q}^{-1}\bigg).
\end{align*}
Differentiating (\ref{eq419}) once and twice, we respectively get
\begin{align*}\frac{h_k-h_{kt}}{h-h_t}=&-\sum b^{ij}\nabla_kb_{ij}+\nabla_k\psi\\
=&-\sum b^{ii}(h_{kii}+h_i\delta_{ik})+\nabla_k\psi,
\end{align*}
and
\begin{align*}
\frac{h_{11}-h_{11t}}{h-h_t}-\frac{(h_1-h_{1t})^2}{(h-h_t)^2}=&-\bigg(-\sum (b^{ii})^2(\nabla_{i}b_{ii})^2+b^{ii}\nabla_{ii}b_{ii}\bigg)+\nabla_{11}\psi\\
=&-\sum b^{ii}\nabla_{11}b_{ii}+\sum b^{ii}b^{jj}(\nabla_1b_{ij})^2+\nabla_{11}\psi.
\end{align*}
By the Ricci identity, we have
\begin{align*}
\nabla_{11}b_{ii}=\nabla_{ii}b_{11}-b_{11}+b_{ii}.
\end{align*}
Thus, we can derive
\begin{align*}
\frac{\frac{\partial}{\partial t}\widetilde{E}}{h-h_t}=&b^{11}\bigg(\frac{h_{11t}+h_t}
{h-h_t}\bigg)-A\frac{h_t}{h(h-h_t)}
+\frac{2B\sum h_kh_{kt}}{h-h_t}\\
=&b^{11}\bigg(\frac{h_{11t}-h_{11}}{h-h_t}+\frac{h_{11}+h-h+h_t}{h-h_t}\bigg)-A\frac{1}{h}
\frac{h_t-h+h}{h-h_t}
+\frac{2B\sum h_kh_{kt}}{h-h_t}\\
=&b^{11}\bigg(-\frac{(h_1-h_{1t})^2}{(h-h_t)^2}+\sum b^{ii}
\nabla_{11}b_{ii}-\sum b^{ii}b^{jj}(\nabla_1b_{ij})^2-\nabla_{11}\psi\bigg.\\
&\bigg.+\frac{h_{11}+h-{(h-h_t)}}
{h-h_t}\bigg) -\frac{A}{h}\bigg(\frac{-(h-h_t)+h}{h-h_t}\bigg)+\frac{2B\sum h_kh_{kt}}{h-h_t}\\
=&b^{11}\bigg(-\frac{(h_1-h_{1t})^2}{(h-h_t)^2}+\sum b^{ii}
\nabla_{11}b_{ii}-\sum b^{ii}b^{jj}(\nabla_1b_{ij})^2-\nabla_{11}\psi\bigg)\\
&+b^{11}\bigg(\frac{h_{11}+h}{h-h_t}-1\bigg) +\frac{A}{h}-\frac{A}{h-h_t}+\frac{2B\sum h_kh_{kt}}{h-h_t}\\
=&b^{11}\bigg(-\frac{(h_1-h_{1t})^2}{(h-h_t)^2}+\sum b^{ii}
\nabla_{11}b_{ii}-\sum b^{ii}b^{jj}(\nabla_1b_{ij})^2-\nabla_{11}\psi\bigg)+\frac{1-A}{h-h_t}\\
&-b^{11}+\frac{A}{h}+\frac{2B\sum h_kh_{kt}}{h-h_t}\\
\leq&b^{11}\bigg(\sum b^{ii}(\nabla_{ii}b_{11}-b_{11}+b_{ii})-\sum b^{ii}b^{jj}(\nabla_1b_{ij})^2\bigg)
-b^{11}\nabla_{11}\psi+\frac{1-A}{h-h_t}\\
&+\frac{A}{h}+\frac{2B\sum h_kh_{kt}}{h-h_t}\\
\leq&\sum b^{ii}\bigg[(b^{11})^2(\nabla_ib_{11})^2+A\bigg(\frac{h_{ii}}{h}-\frac{h_i^2}{h^2}
\bigg)-2B(\sum h_kh_{kii}+h_{ii}^2)\bigg]\\
&-b^{11}\sum b^{ii}b^{jj}(\nabla_1b_{ij})^2-b^{11}\nabla_{11}\psi+\frac{1-A}{h-h_t}+\frac{A}{h}+\frac{2B\sum h_kh_{kt}}{h-h_t}\\
\leq&\sum b^{ii}\bigg[A\bigg(\frac{h_{ii}+h-h}{h}-\frac{h_i^2}{h^2}\bigg)\bigg]+2B
\sum h_k\bigg(-\sum b^{ii}h_{kii}+\frac{h_{kt}}{h-h_t}\bigg)\\
&-2B\sum b^{ii}(b_{ii}-h)^2-b^{11}\nabla_{11}\psi+\frac{1-A}{h-h_t}+\frac{A}{h}\\
\leq&\sum b^{ii}\bigg[A\bigg(\frac{b_{ii}}{h}-1\bigg)\bigg]+2B\sum h_k\bigg(\frac{h_k}{h-h_t}
+b^{kk}h_k-\nabla_k\psi\bigg)\\
&-2B\sum b^{ii}(b_{ii}^2-2b_{ii}h)-b^{11}\nabla_{11}\psi+\frac{1-A}{h-h_t}+\frac{A}{h}\\
\leq&-2B\sum h_k\nabla_k\psi-b^{11}\nabla_{11}\psi+(2B|\nabla h|-A)\sum b^{ii}-2B\sum b_{ii}\\
&+4B(n-1)h+\frac{2B|\nabla h|^2+1-A}{h-h_t}+\frac{nA}{h}.
\end{align*}
Recall
\begin{align*}
\psi(x,t)=\log\bigg(\theta h^pf(\widetilde{V}_{\gamma,q})^{-1}\bigg)=\log \theta+p\log h+\log f-\log \widetilde{V}_{\gamma,q},
\end{align*}
since $\theta$ is a constant factor, we have $\theta_k=0$.
Consequently, we may obtain following form by $\psi(x,t)$ and (\ref{eq418}),
\begin{align*}
&-2B\sum h_k\nabla_k\psi-b^{11}\nabla_{11}\psi\\
=&-2B\sum h_k\bigg(p\frac{h_k}{h}+\frac{f_k}{f}-\frac{(\widetilde{V}_{\gamma,q})_k}{\widetilde{V}_{\gamma,q}}
\bigg)-b^{11}\nabla_{11}\psi\\
=&-2B\sum h_k\bigg(p\frac{h_k}{h}+\frac{f_k}{f}-\frac{(\widetilde{V}_{\gamma,q})_k}{\widetilde{V}_{\gamma,q}}\bigg)\\
&-b^{11}\bigg(p\frac{h_{11}h-h_1^2}{h^2}+\frac{ff_{11}-f_1^2}{f^2}-\frac{(\widetilde{V}_{\gamma,q})_{11}\widetilde{V}_{\gamma,q}-(\widetilde{V}_{\gamma,q})_1^2}
{(\widetilde{V}_{\gamma,q})^2}\bigg)\\
\leq&C_7B+C_{8}b^{11}+2B\sum h_k\frac{(\widetilde{V}_{\gamma,q})_k}{\widetilde{V}_{\gamma,q}}-|p|b^{11}\frac{h(b_{11}-h)}{h^2}+b^{11}\bigg(\frac{(\widetilde{V}_{\gamma,q})_{11}\widetilde{V}_{\gamma,q}-
(\widetilde{V}_{\gamma,q})_1^2}{(\widetilde{V}_{\gamma,q})^2}\bigg)\\
=&C_7B+C_{8}b^{11}+2B\sum h_k\frac{(\widetilde{V}_{\gamma,q})_k}{\widetilde{V}_{\gamma,q}}-\frac{|p|}{h}+\frac{|p|b^{11}}{h}+b^{11}\bigg(\frac{(\widetilde{V}_{\gamma,q})_{11}\widetilde{V}_{\gamma,q}
-(\widetilde{V}_{\gamma,q})_1^2}{(\widetilde{V}_{\gamma,q})^2}\bigg)\\
\leq&C_7B+C_{9}b^{11}+2B\sum h_k\frac{(\widetilde{V}_{\gamma,q})_k}{\widetilde{V}_{\gamma,q}}+b^{11}\frac{(\widetilde{V}_{\gamma,q})_{11}
}{\widetilde{V}_{\gamma,q}}.
\end{align*}
Recall (\ref{eq405})
\begin{align*}
\widetilde{V}_{\gamma,q}(\Omega_t,y)=2e^{-|\rho_{\Omega_t}(u)u|^2/2}\int_{S^{n-1}}\int_{0}^{\rho_{\Omega_t,z}(u)}e^{-|\rho_{\Omega_t}(u)u-su|^2/2}s^{q-2}dsdu,
\end{align*}
then,
\begin{align}\label{eq420}
\nonumber (\widetilde{V}_{\gamma,q})_k=&-2e^{-|\rho_{\Omega_t}(u)u|^2/2}|\rho_{\Omega_t}(u)u|(\rho_ku+\rho\cdot e_k)\int_{S^{n-1}}\int_{0}^{\rho_{\Omega_t,z}(u)}e^{-|\rho_{\Omega_t}(u)u-su|^2/2}s^{q-2}dsdu\\
&+2e^{-|\rho_{\Omega_t}(u)u|^2/2}\int_{S^{n-1}}\rho_{\Omega_t,z}(u)^{q-2}e^{-|\rho_{\Omega_t}(u)u-\rho_{\Omega_t,z}(u)u|^2/2}(\rho_{\Omega_t,z})_kdu,
\end{align}
where $\rho_{\Omega_t,z}(u)=\frac{h(\Omega_t,x)-z\cdot x}{u\cdot x}$, $z=\nabla h$,
\begin{align}\label{eq421}
(\rho_{\Omega_t,z})_k=\bigg(\frac{h(\Omega_t,x)-\sum_ih_i\cdot x}{u\cdot x}\bigg)_k=\frac{h_k-(\sum_ih_{ik}x+\nabla h)}{u\cdot x}-\frac{(h-\nabla h\cdot x)(e_k\cdot x+u)}{(u\cdot x)^2},
\end{align}
from $\rho=(h^2+|\nabla h|^2)^{\frac{1}{2}}$, we get
\begin{align}\label{eq421+}
 \rho_k=\rho^{-1}(hh_k+\Sigma h_kh_{kk})=\rho^{-1}(hh_k+\Sigma h_k(b_{kk}-h\delta_{kk})).
\end{align}
Thus, from Lemma \ref{lem42}, (\ref{eq406}), (\ref{eq420}), (\ref{eq421}) and (\ref{eq421+}), we get
\begin{align*}
 2B\sum h_k\frac{(\widetilde{V}_{\gamma,q})_k}{\widetilde{V}_{\gamma,q}}\leq C_{10}Bb_{11}.
\end{align*}
\begin{align}\label{eq422}
\nonumber(\widetilde{V}_{\gamma,q})_{11}=&-2e^{-|\rho_{\Omega_t}(u)u|^2/2}|\rho_{\Omega_t}(u)u|(\rho_{11}u+\rho_1\cdot e_1+\rho_1\cdot e_1+\delta_{11})\times\\
\nonumber&\int_{S^{n-1}}\int_{0}^{\rho_{\Omega_t,z}(u)}e^{-|\rho_{\Omega_t}(u)u-su|^2/2}s^{q-2}dsdu\\
\nonumber&-2e^{-|\rho_{\Omega_t}(u)u|^2/2}|\rho_{\Omega_t}(u)u|(\rho_1u+\rho\cdot e_1)\int_{S^{n-1}}\rho_{\Omega_t,z}(u)^{q-2}e^{-|\rho_{\Omega_t}(u)u-\rho_{\Omega_t,z}(u)u|^2/2}(\rho_{\Omega_t,z})_kdu\\
\nonumber-&2e^{-|\rho_{\Omega_t}(u)u|^2/2}|\rho_{\Omega_t}(u)u|(\rho_1u+\rho\cdot e_1)\int_{S^{n-1}}\rho_{\Omega_t,z}(u)^{q-2}e^{-|\rho_{\Omega_t}(u)u-\rho_{\Omega_t,z}(u)u|^2/2}(\rho_{\Omega_t,z})_kdu\\
\nonumber+&2e^{-|\rho_{\Omega_t}(u)u|^2/2}\bigg[(q-2)\int_{S^{n-1}}\rho_{\Omega_t,z}(u)^{q-3}e^{-|\rho_{\Omega_t}(u)u-\rho_{\Omega_t,z}(u)u|^2/2}(\rho_{\Omega_t,z})_1^2du\\
\nonumber&+\int_{S^{n-1}}\rho_{\Omega_t,z}(u)^{q-2}e^{-|\rho_{\Omega_t}(u)u-\rho_{\Omega_t,z}(u)u|^2/2}\bigg(-|\rho_{\Omega_t}(u)u-\rho_{\Omega_t,z}(u)u|(\rho_1u+\rho e_1\\
&-(\rho_{\Omega_t,z})_1u-\rho_{\Omega_t,z}e_1)\bigg)(\rho_{\Omega_t,z})_1du+\int_{S^{n-1}}\rho_{\Omega_t,z}(u)^{q-2}e^{-|\rho_{\Omega_t}(u)u
-\rho_{\Omega_t,z}(u)u|^2/2}(\rho_{\Omega_t,z})_{11}du\bigg],
\end{align}
where 
\begin{align}\label{eq423}
\nonumber(\rho_{\Omega_t,z})_{11}=&\frac{(h_{11}-(\sum_ih_{i11}x+\sum_ih_{i1}+\sum_ih_{i1})}{u\cdot x}-\frac{(h_1-(\sum_ih_{i1}\cdot x+\nabla h)(e_1x+u)}{(u\cdot x)^2}\\
\nonumber&-\frac{(h_1-\sum_ih_{i1}x-\nabla h)(e_1\cdot x+u)+(h+(\nabla h\cdot x))(\delta_{11}\cdot x+2e_1)}{(u\cdot x)^2}\\
&+\frac{2(e_1\cdot x+u)(h-\nabla h\cdot x)(e_1\cdot x+u)}{(u\cdot x)^3}.
\end{align}
From (\ref{eq421+}), we get
\begin{align*}
\rho_{11}=\frac{hh_{11}+h_1^2+\Sigma h_1h_{111}+\Sigma h_{11}^2}{\rho}-\frac{h_1^2b_{11}^2}{\rho^3}.
\end{align*}
This combined with (\ref{eq418}) implies
\begin{align}\label{eq424}
\rho_{11}=\frac{h(b_{11}-h)+h_1^2+\Sigma h_1\bigg(A\frac{h_1}{h}-2Bh(b_{11}-h)-b^{11}(h_1\delta_{11})\bigg )b_{11}}{\rho}-\frac{h_1^2b_{11}^2}{\rho^3}.
\end{align}

By Lemma \ref{lem42}, Lemma \ref{lem43}, (\ref{eq406}), (\ref{eq422}), (\ref{eq418}), (\ref{eq423}) and (\ref{eq424}), we conclude for $q>2$, 
\begin{align*}
b^{11}\frac{(\widetilde{V}_{\gamma,q})_{11}}{\widetilde{V}_{\gamma,q}}\leq C_{11}b_{11}.
\end{align*}
It follows that
\begin{align*}
\frac{\frac{\partial}{\partial t}\widetilde{E}}{h-h_t}\leq C_{12}Bb_{11}+C_{13}b^{11}+
(2B|\nabla h|-A)\sum b^{ii}-2B\sum b_{ii}+4B(n-1)h+\frac{nA}{h}<0,
\end{align*}
provided $b_{11}>>1$ and if we choose $A>>B$. We obtain
\begin{align*}
\widetilde{E}(x_0,t)\leq C,
\end{align*}
hence,
\begin{align*}
E(x_0,t)=\widetilde{E}(x_0,t)\leq C.
\end{align*}
This tells us the principal radii are bounded from above, or equivalently $\kappa_i\geq\frac{1}{C}$.
\end{proof}

\vskip 0pt
\section{\bf The convergence of the flow}\label{sec5}

With the help of priori estimates in the section \ref{sec4}, the long-time existence and asymptotic behaviour of the flow (\ref{eq109}) (or (\ref{eq301})) are obtained, we also can complete proof of Theorem \ref{thm13}.
\begin{proof}[Proof of the Theorem \ref{thm13}] Since Eq. (\ref{eq303}) is parabolic, we can get its short time existence. Let $T$ be the maximal time such that $h(\cdot, t)$ is a smooth even solution
to Eq. (\ref{eq303}) for $t\in[0,T)$. Lemma \ref{lem42}-\ref{lem44} enable us to apply Lemma \ref{lem46} to Eq. (\ref{eq303}), thus, we can deduce a
uniformly upper and lower bounds for the biggest eigenvalue of $\{(h_{ij}+h\delta_{ij})(x,t)\}$. This implies
\begin{align*}
	C^{-1}I\leq (h_{ij}+h\delta_{ij})(x,t)\leq CI,\quad \forall (x,t)\in S^{n-1}\times [0,T),
\end{align*}
where $C>0$ independents on $t$. This shows that Eq. (\ref{eq303}) is uniformly parabolic. Estimates for higher derivatives follows from the standard regularity theory of uniformly parabolic equations
Krylov \cite{KR}. Hence, we obtain the long time existence and regularity of solutions for the flow
(\ref{eq109}) (or (\ref{eq301})). Moreover, we obtain
\begin{align}\label{eq501}
\|h\|_{C^{l,m}_{x,t}(S^{n-1}\times [0,T))}\leq C_{l,m},
\end{align}
for some $C_{l,m}$ ($l, m$ are nonnegative integers pairs) independent of $t$, then $T=\infty$. Using parabolic comparison principle, we can attain the uniqueness of the smooth even solution $h(\cdot,t)$ of Eq. (\ref{eq303}).

Now, recall the non-increasing property of $\Phi(\Omega_t)$ in Lemma \ref{lem32}, we know that for $p\geq 0$,

\begin{align}\label{eq502}
\frac{\partial\Phi(\Omega_t)}{\partial t}\leq 0.
\end{align}
Based on (\ref{eq502}), there exists a $t_0$ such that
\begin{align*}
\frac{\partial\Phi(\Omega_t)}{\partial t}\bigg|_{t=t_0}=0,
\end{align*}
this yields 
\begin{align*}
\tau_1\widetilde{V}_{\gamma,q}(\Omega_{t_0},\cdot)h_{\Omega_{t_0}}^{1-p}\det(h_{ij}+h\delta_{ij})=f, \quad p>0.
\end{align*}
and
\begin{align*}
\tau_2\widetilde{V}_{\gamma,q}(\Omega_{t_0},\cdot)h_{\Omega_{t_0}}\det(h_{ij}+h\delta_{ij})=f,  \quad p=0.
\end{align*}
Let $\Omega=\Omega_{t_0}$, thus, $\Omega$ satisfies (\ref{eq108}) for $p>0$ and (\ref{eq108+}) for $p=0$.

In view of (\ref{eq501}), applying the Arzel$\grave{a}$-Ascoli theorem and a diagonal argument, we can extract
a subsequence of $t$, denoted by $\{t_j\}_{j \in \mathbb{N}}\subset (0,+\infty)$, and there exists a smooth even function $h(x)$ such that
\begin{align}\label{eq503}
\|h(x,t_j)-h(x)\|_{C^i(S^{n-1})}\rightarrow 0,
\end{align}
uniformly for each nonnegative integer $i$ as $t_j \rightarrow \infty$. This reveals that $h(x)$ is a support function. Let us denote by $\Omega$ the convex body determined by $h(x)$. Thus, $\Omega$ is smooth, origin-symmetric and strictly convex.

Moreover, by (\ref{eq501}) and the uniform estimates in section \ref{sec4}, we conclude that $\Phi(\Omega_t)$ is a bounded function in $t$ and $\frac{\partial \Phi(\Omega_t)}{\partial t}$ is uniformly continuous. Thus, for any $t>0$, by monotonicity of the $\Phi(\Omega_t)$ in Lemma \ref{lem32}, there is a constant $C>0$ independents of $t$, such that
\begin{align*}
\int_0^t\bigg(-\frac{\partial\Phi(\Omega_t)}{\partial t}\bigg)dt=\Phi(\Omega_0)-\Phi(\Omega_t)\leq C,
\end{align*}
this gives
\begin{align}\label{eq504}\lim_{t\rightarrow\infty}\Phi(\Omega_t)-\Phi(\Omega_0)=
-\int_0^\infty\bigg|\frac{\partial}{\partial t}\Phi(\Omega_t)\bigg|dt\leq C.
\end{align}
The left hand side of (\ref{eq504}) is bounded below by $-2C$, therefore, there is a subsequence $t_j\rightarrow\infty$ such that
\begin{align*}
\frac{\partial}{\partial t}\Phi(\Omega_{t_j})\rightarrow 0 \quad\text{as}\quad  t_j\rightarrow\infty.
\end{align*}
The proof of Lemma \ref{lem32} shows that when $p>0$,
\begin{align}\label{eq505}
\bigg(\int_{S^{n-1}}f(x)(h^\infty)^pdx\bigg)^2=\int_{S^{n-1}}\widetilde{V}_{\gamma,q}\frac{h^\infty}{{\mathcal{K}}}dx
\int_{S^{n-1}}f^2(h^\infty)^{2p-1}{\mathcal{K}}\widetilde{V}_{\gamma,q}^{-1}dx,
\end{align}
and $p=0$,
\begin{align}\label{eq506}
\bigg(\int_{S^{n-1}}f(x)dx\bigg)^2=\int_{S^{n-1}}\widetilde{V}_{\gamma,q}\frac{h^\infty}{{\mathcal{K}}}dx
\int_{S^{n-1}}\frac{f^2{\mathcal{K}}}{(h^\infty)\widetilde{V}_{\gamma,q}}dx,
\end{align}
where $h^{\infty}$ is the support function of $\Omega^{\infty}$. By equality condition of H\"{o}lder inequality, (\ref{eq505}) and (\ref{eq506}) means that for $p>0$,
\begin{align}\label{eq507}
\tau_1\widetilde{V}_{\gamma,q}(\Omega,\cdot)(h^\infty_{\Omega})^{1-p}\det(h^\infty_{ij}+h^\infty\delta_{ij})=f(x),
\end{align}
and $p=0$,
\begin{align}\label{eq508}
\tau_2\widetilde{V}_{\gamma,q}(\Omega,\cdot)h^\infty_{\Omega}\det(h^\infty_{ij}+h^\infty\delta_{ij})=f(x),
\end{align}
(\ref{eq507}) satisfies (\ref{eq108}) and (\ref{eq508}) satisfies (\ref{eq108+})
with $\tau_1$ and $\tau_2$ given by
\begin{align*}\frac{1}{\tau_1}=\lim_{t_j\rightarrow\infty}\theta(t_j),\quad p>0,
\end{align*}
and 
\begin{align*}\frac{1}{\tau_2}=\lim_{t_j\rightarrow\infty}\theta(t_j),\quad p=0,
\end{align*}
This completes the proof of Theorem \ref{thm13}.
\end{proof}


\begin{thebibliography}{11}

{\small

\bibitem{AB1} B. Andrews, P. F. Guan and L. Ni, {\it Flow by powers of the Gauss curvature}, Adv. Math., {\bf 299} (2016), 174-201.

\bibitem{BO} K. J. Boroczky, E. Lutwak, D. Yang and G. Y. Zhang, {\it The logarithmic Minkowski problem}, Amer. Math. Soc., {\bf 26}
(2013), 831-852.

\bibitem{BR} S. Brendle, K. Choi and P. Daskalopoulos, {\it Asymptotic behavior of flows by powers of the Gaussian curvature}, Acta Math., {\bf 219} (2017), 1-16.

\bibitem{BR1} P. Bryan, M. N. Ivaki and J. Scheuer, {\it A unified flow approach to smooth, even $L_p$-Minkowski problems}, Anal. PDE, {\bf 12} (2019), 259-280.

\bibitem{CC} C. Q. Chen, Y. Huang and Y. M. Zhao, {\it Smooth solution to the $L_p$ dual Minkowski problem}, Math. Ann., {\bf 373} (2019), 953-976.

\bibitem{CH} H. D. Chen and Q. R. Li, {\it The $L_p$ dual Minkowski problem and related parabolic flows}, J. Funct. Anal., {\bf 281} (2021), 1-65.

\bibitem{CK0} K. S. Chou and  X. J. Wang, {\it A logarithmic Gauss curvature flow and the Minkowski problem}, Ann. Inst. H. Poincar\'{e} Anal. Non Lin\'{e}aire, {\bf 17} (2000), 733-751.

\bibitem{FE} Y. B. Feng, S. N. Hu and L. Xu, {\it On the $L_p$ Gaussian Minkowski problem}, J. Differential Equation, {\bf 363} (2023), 350-390.

\bibitem{FI} W. J. Firey, {\it Shapes of worn stones}, Mathematika, {\bf 21} (1974), 1-11.

\bibitem{GL} L. J. Guo, D. M. Xi and Y. M. Zhao, {\it The $L_p$ chord Minkowski problem in a critical interval}, Math. Ann., (2023), DOI:org/10.1007/s00208-023-02721-8.

\bibitem{HC2} C. Haberl and F. E. Schuster, {\it Asymmetric affine $L_p$ Sobolev inequalities}, J. Funct. Anal., {\bf 257} (2009), 641-658. 

\bibitem{HJ1} J. R. Hu, Y. Huang and J. Lu, {\it On the regularity of the chord log-Minkowski problem}, (2023), arXiv:2304.14220.

\bibitem{HJ2}J. R. Hu, Y. Huang, J. Lu and S. N. Wang, {\it The chord Gauss curvature flow and its $L_p$ chord Minkowski problem}, (2023), arXiv:2305.00453.

\bibitem{HY01} Y. Huang and L. Qin, {\it The Gaussian chord Minkowksi problem}, Discrete Contin. Dyn. Syst. Ser. S, (2023), DOI:10.3934/dcdss.2023118. 

\bibitem{HY02} Y. Huang, D. M. Xi and Y. M. Zhao, {\it The Minkowski problem in Gaussian probability space}, Adv. Math., {\bf 385} (2021), 1-36.

\bibitem{KR} N. V. Krylov, {\it Nonlinear elliptic and parabolic equations of the second order}, Dordrecht, Holland: D. Reidel Publishing Company, 1987.

\bibitem{LR} Q. R. Li, W. M. Sheng and X. J. Wang, {\it Flow by Gauss curvature to the Aleksandrov and dual Minkowski problems}, J. Eur. Math. Soc., {\bf 22} (2019), 893-923.

\bibitem{LYY} Y. Y. Li, {\it The $L_p$ chord Minkowski problem for negative p}, J. Geom. Anal., (2024), DOI.org/10.1007/s12220-023-01528-y.
    
\bibitem{LJ} J. Q. Liu, {\it The $L_p$-Gaussian Minkowski problem}, Calc. Var. Partial Differential Equations, {\bf 61} (2022), 1-23.

\bibitem{LY} Y. N. Liu and J. Lu, {\it A flow method for the dual Orlicz-Monkowski problem}, Trans. Amer. Math. Soc., {\bf 373} (2020), 5833-5853.

\bibitem{LY1} Y. N. Liu and J. Lu, {\it A generalized Gauss curvature flow related to the Orlicz-Minkowski problem}, arXiv:2005.02376.

\bibitem{LE0} E. Lutwak, {\it The Brunn-Minkowski-Firey theory. I. Mixed volumes and the Minkowski problem}, J. Differential Geom., {\bf 38} (1993), 131-150.

\bibitem{LE01} E. Lutwak, D. Yang and G. Y. Zhang, {\it Sharp affine $L_p$ Sobolev inequalities}, J. Differential Geom., {\bf 62} (2002), 17-38.

\bibitem{LE1} E. Lutwak, D. M. Xi, D. Yang and G. Y. Zhang, {\it Chord measures in integral geometry and their Minkowski problems}, Comm. Pure Appl. Math., (2023), DOI: 10.1002/cpa.22190.

\bibitem{QL} L. Qin, {\it Nonlocal energies of convex bodies and their $\log$-Minkowski problems}, Adv. Math., {\bf 427} (2023), 1-19. 

\bibitem{SC} R. Schneider, {\it Convex bodies: the Brunn-Minkowski theory}, Cambridge University Press, Cambridge, 2014.

\bibitem{TK} K. Tso, {\it Deforming a hypersurface by its Gauss-Kronecker curvature}, Comm. Pure Appl. Math., {\bf38} (1985),
867-882.

\bibitem{UR} J. I. E. Urbas, {\it An expansion of convex hypersurfaces}, J. Differential Geom., {\bf 33} (1991), 91-125.

\bibitem{SW} W. M. Sheng and K. Xue, {\it Flow by Gauss curvature to the $L_p$-Gaussian Minkowski problem}, (2022), arXiv:2212.01822.

\bibitem{XD} D. M. Xi, D. Yang, G. Y. Zhang and Y. M. Zhao, {\it The $L_p$ chord Minkowski problem}, Adv. Nonlinear Stud., {\bf 23} (2023), 20220041.

\bibitem{XJ} J. Xiao, {\it Prescribing capacitary curvature measures on planar convex domains}, J. Geom. Anal., {\bf 30} (2020), 2225-2240.

\bibitem{ZX} X. Zhao and P. B. Zhao, {\it Flow by Gauss curvature to the orlicz chord Minkowski problem}, Ann. Mat. Pura Appl., (2024),  DOI.org/10.1007/s10231-024-01448-w.

\bibitem{ZG} G. X. Zhu, {\it The centro-affine Minkowksi problem}, J. Differential Geom., {\bf 101} (2015), 159-174.

}
\end{thebibliography}
\end{document}